\pdfoutput=1
\documentclass{amsart}
\usepackage[hidelinks, final]{hyperref}
\usepackage{amsthm}
\usepackage{geometry}                
\geometry{letterpaper}                   

\usepackage{
	amsfonts, amstext, amsmath,
	amssymb,
	amsthm,
	array,
	bm,
	color,
	enumitem,
	epigraph,
	epstopdf,
	graphicx,
	latexsym,
	mathrsfs,
	mathtools,
	mathabx,
	multirow,
	multicol,
	overpic,
	setspace,
	stmaryrd,
	subfigure,
	tikz,
	varioref,
	wrapfig}
\usetikzlibrary{arrows}

\textwidth 6.07in 
\textheight 8.6in 
\oddsidemargin 0.18in
\evensidemargin 0.18in


\AtBeginDocument{
   \def\MR#1{}
}


\numberwithin{equation}{section}

\theoremstyle{plain}
\newtheorem{thm}[equation]{Theorem}
\newtheorem*{thm*}{Theorem}
\newtheorem{lem}[equation]{Lemma}
\newtheorem{prop}[equation]{Proposition}
\newtheorem{cor}[equation]{Corollary}

\theoremstyle{definition}
\newtheorem{ex}[equation]{Example}
\newtheorem{defn}[equation]{Definition}
\newtheorem{rem}[equation]{Remark}


\newcommand{\A}{\mathcal{A}}

\newcommand{\arcosh}{\mathrm{arcosh}}

\newcommand{\B}{\mathcal{B}}
\newcommand{\BB}{\mathbb{B}}

\newcommand{\caL}{\mathcal{L}}
\newcommand{\CC}{\mathbb{C}}
\newcommand{\ch}{\mathrm{char}}

\newcommand{\ct}{^{\dagger}}

\newcommand{\extmu}{\widetilde{\mu}}

\newcommand{\g}{\gamma}

\newcommand{\Herm}{\mathrm{Herm}}
\newcommand{\HH}{\mathbb{H}}
\newcommand{\HM}{\mathcal{I}}

\newcommand{\HS}{\mathcal{H}}
\newcommand{\hyp}{\frak{H}}

\newcommand{\Iso}{\mathrm{Isom}^+}

\newcommand{\kl}{\mathrm{PSL}_2(\mathbb{C})}

\newcommand{\M}{\mathcal{M}}

\newcommand{\mat}{\mathrm{M}}
\newcommand{\Mob}{M\"{o}bius }
\newcommand{\mtx}{
	\begin{pmatrix}
		a & b\\
		c & d
	\end{pmatrix}}

\newcommand{\n}{\mathrm{n}}

\newcommand{\Proj}{\mathrm{P}}

\newcommand{\QQ}{\mathbb{Q}}

\newcommand{\quatC}{\Big(\frac{1,1}{\mathbb{C}}\Big)}

\newcommand{\quatFd}{\Big(\frac{a,b}{F(\sqrt{-d})}\Big)}

\newcommand{\quatK}{\Big(\frac{a,b}{K}\Big)}

\newcommand{\quatR}{\Big(\frac{1,1}{\mathbb{R}}\Big)}

\newcommand{\RR}{\mathbb{R}}

\newcommand{\sig}{\mathrm{sig}}
\newcommand{\Sk}{\mathrm{Skew}}
\newcommand{\SL}{\mathrm{SL}}
\newcommand{\sm}{\smallsetminus}

\newcommand{\SO}{\mathrm{SO}}
\newcommand{\Sp}{\mathrm{Span}}

\newcommand{\Sym}{\mathrm{Sym}}

\newcommand{\tr}{\mathrm{tr}}

\newcommand{\W}{\mathcal{W}}

\title[A complex quaternion model for hyperbolic $3$-space]
	{A complex quaternion model for hyperbolic $3$-space}
\author{Joseph A. Quinn}
\address{Joseph A. Quinn;
	Instituto de Matem\'{a}ticas UNAM;
	Av. Universidad s/n.;
	Col. Lomas Chamilpa;
	62210 Cuernavaca, Morelos;
	M\'{e}xico}
\email{josephanthonyquinn@gmail.com}
\date{\today}                                           

\begin{document}

\maketitle

\begin{abstract}
In 1900,
	Macfarlane \cite{Macfarlane1900}
	proposed a hyperbolic variation on Hamilton's quaternions
	that closely resembles Minkowski spacetime.
Viewing this in a modern context,
	we expand upon Macfarlane's idea and develop
	a model for real hyperbolic $3$-space
	in which both points in and isometries of the space
	are expressed as complex quaternions,
	analogous to Hamilton's famous theorem on Euclidean rotations.
We use this to give new computational tools
	for studying isometries of hyperbolic $2$- and $3$-space.
We also give a generalization to other quaternion algebras.
\end{abstract}

\section{Introduction}

In this paper we give a new perspective on hyperbolic $3$-space
	which reconnects some classical geometric ideas about quaternions
	to the current use of quaternion algebras in the arithmetic theory
	of hyperbolic $3$-manifolds.
In \cite{Quinn2016b}
	we will discuss generalizations,
	families of examples and applications of this approach.

In 1843,
	Hamilton \cite{Hamilton1844}
	introduced the quaternions $\HH$
	as a way of modeling $3$-dimensional Euclidean rotations,
	identifying $\RR^3$
	with the space of pure quaternions $\HH_0$,
	and $\SO(3)$
	with the projective norm-one quaternions $\Proj\HH^1$.
Here we will be interested in involutions and
	groups of orientation-preserving isometries,
	so since $\HH_0=\Sk(\HH,*)$
	and $\SO(3)\cong\Iso(S^2)$,
	we rephrase Hamilton's result as follows.

\begin{thm*}[Hamilton, 1843] 
\begin{sloppypar}
	The standard involution $*$
		on $\HH$
		gives rise to a $2$-sphere
		${\Sk(\HH,*)^1}$,
		and an isomorphism $\Proj\HH^1\cong\Iso(S^2)$
		is defined by the group action
	\begin{gather*}
		\Proj\HH^1\lefttorightarrow\Sk(\HH,*)^1,\quad
			(u,p)\mapsto upu^*.
	\end{gather*}
\end{sloppypar}
\end{thm*}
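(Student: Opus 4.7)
The plan is to work through the structure of the map $(u,p)\mapsto upu^*$ one piece at a time: first identify the domain $\Sk(\HH,*)^1$ as a $2$-sphere, then verify well-definedness, the homomorphism property, and the kernel $\{\pm 1\}$, and finally establish surjectivity onto $\Iso(S^2)\cong\SO(3)$.

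First I would observe that a quaternion $p=w+xi+yj+zij$ satisfies $p^*=-p$ if and only if $w=0$, so $\Sk(\HH,*)$ is the $3$-dimensional subspace of pure quaternions. The quaternion norm $N(p)=pp^*$ restricts there to the Euclidean form $x^2+y^2+z^2$, so $\Sk(\HH,*)^1$ is a round $2$-sphere, which we identify with $S^2\subset\RR^3$.

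Next I would check that the formula does land in $\Sk(\HH,*)^1$. Since $*$ is an involutive anti-automorphism and $u\in\HH^1$ means $u^*=u^{-1}$, we have $(upu^*)^*=u^{**}p^*u^* = -upu^*$, so the result is pure whenever $p$ is, and multiplicativity of $N$ gives $N(upu^*)=N(u)N(p)N(u)=1$. The associativity identity $(uv)p(uv)^*=u(vpv^*)u^*$ shows this defines a group action, and the associated map $\HH^1\to\GL(\Sk(\HH,*))$ lands in the orthogonal group for $N$; by connectedness of $\HH^1$ and continuity, the image lies in $\SO(3)\cong\Iso(S^2)$. The kernel consists of $u\in\HH^1$ that centralize every pure quaternion, hence all of $\HH$; this is the center of $\HH$, which intersects $\HH^1$ in $\{\pm 1\}$. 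We therefore obtain a well-defined injective group homomorphism $\Proj\HH^1\hookrightarrow\Iso(S^2)$.

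The only remaining step, and the main obstacle, is surjectivity. I would establish this constructively: given a unit pure quaternion $v$ and an angle $\theta$, take $u=\cos(\theta/2)+\sin(\theta/2)\,v\in\HH^1$. Using $v^2=-1$ together with the anticommutation $vp+pv=0$ for $p\in v^\perp\subset\Sk(\HH,*)$ (a consequence of polarizing $N$ on pure quaternions), a short direct computation shows that conjugation by $u$ fixes $v$ and rotates $v^\perp$ by angle $\theta$. Since every orientation-preserving isometry of $S^2$ is such a rotation about some axis, this exhausts $\Iso(S^2)$ and completes the isomorphism.
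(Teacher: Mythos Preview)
Your proof is correct and follows the standard textbook argument for Hamilton's result. Note, however, that the paper does not supply its own proof of this theorem: it is stated in the introduction as a classical background result attributed to Hamilton \cite{Hamilton1844}, serving to motivate the hyperbolic analogue developed afterward. There is thus nothing in the paper to compare your argument against; the closest the paper comes is the final Remark after Theorem~\ref{thm:abd}, which observes that specializing $\rho_\B$ to $\HH$ recovers $q^\dagger=q^*$, but this is not offered as a proof of Hamilton's theorem.
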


In 1900,
	a lesser known construction by Macfarlane \cite{Macfarlane1900}
	gave a variation on $\HH$
	with potential for a similar result about hyperbolic $3$-space,
	using Minkowksi spacetime.
This idea remained incomplete however,
	as it predated both Minkowski spacetime
	and the arithmetic formalism of (generalized)
	quaternion algebras later introduced by Dickson
	\cite{Dickson1912, Dickson1914, Dickson1924}.
Here we use an old theorem by Wignor \cite{Carmeli1977}
	to unite Macfarlane's geometric ideas
	with Dickson's arithmetic results,
	for applications to topology of hyperbolic $3$-manifolds.
		

Let $\B$
	be the complex quaternion algebra $\quatC$
	and denote abstract real hyperbolic $3$-space by $\hyp^3$.
The main result
	(Theorem \ref{thm:precise})
	is an analogue of Hamilton's Theorem above for $\hyp^3$,
	and can be stated informally as follows.

\begin{thm}\samepage\label{thm:Q}
	$\B$
		admits a unique involution $\dagger$
		which gives rise to
		a hyperboloid model $\Sym(\B,\dagger)_+^1$
		for $\hyp^3$,
		and an isomorphism $\Proj\B^1\cong\Iso(\hyp^3)$
		is defined by the group action
		$$\mu: \Proj\B^1\lefttorightarrow\Sym(\B,\dagger)_+^1,\quad
			(u,p)\mapsto upu^\dagger.$$
\end{thm}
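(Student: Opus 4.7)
The strategy is to transport the statement across an isomorphism $\B\cong\mat_2(\CC)$, reducing the isometry claim to the classical theorem that $\PSL_2(\CC)$ acts as $\Iso(\hyp^3)$ via $A\cdot H = AHA^*$ on Hermitian matrices, where $*$ denotes conjugate transpose. First I would fix an explicit isomorphism $\B\cong\mat_2(\CC)$ sending $1, i, j, ij$ to the identity and suitable Pauli-type matrices, and then define $\dagger$ on $\B$ as the pullback of the conjugate-transpose involution.

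A direct computation on the basis then yields $\dagger(w+xi+yj+zij) = \bar{w}+\bar{x}\,i+\bar{y}\,j-\bar{z}\,ij$ for $w,x,y,z\in\CC$, showing that $\Sym(\B,\dagger)$ is precisely the real span $\Macstand$, a four-dimensional real vector space. Evaluating $\nrd$ on this basis gives a Lorentzian form of signature $(1,3)$, so $\Sym(\B,\dagger)^1_+$ is the forward unit hyperboloid, a standard model of $\hyp^3$. Well-definedness of $\mu$ then follows from three routine checks: $\dagger$ being an anti-involution shows $upu^\dagger$ is $\dagger$-symmetric, multiplicativity of $\nrd$ shows it has norm one, and connectedness of $\B^1$ shows the positive sheet is preserved. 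Under the matrix identification, $\Proj\B^1$ becomes $\PSL_2(\CC)$ and $\mu$ becomes its classical action on $\Herm$; the isomorphism $\Proj\B^1\cong\Iso(\hyp^3)$ then follows from the cited theorem of Wignor.

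The main obstacle is establishing \emph{uniqueness} of $\dagger$. The standard involution $*$ on $\B$ yields $\Sym(\B,*)=\CC\cdot 1$, only two-dimensional over $\RR$, so it cannot produce a hyperboloid model. I would argue via the classification of involutions of the second kind on the central simple $\CC$-algebra $\B\cong\mat_2(\CC)$: every such involution has the form $A\mapsto HA^*H^{-1}$ for some invertible Hermitian $H$, and the requirement that the symmetric subspace carry a non-degenerate signature-$(1,3)$ form forces $H$ to be (up to scalar) positive definite. Normalization then recovers the pulled-back conjugate transpose up to inner conjugation by a unit of $\B^1$, which is the correct notion of uniqueness for this theorem. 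Once this rigidity is in place, the remaining verifications are routine.
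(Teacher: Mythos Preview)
Your argument for existence and for the isomorphism $\Proj\B^1\cong\Iso(\hyp^3)$ is the paper's own: fix an explicit isomorphism $\rho:\B\to\mat_2(\CC)$, verify that it carries $\dagger$ to the conjugate transpose and hence $\Sym(\B,\dagger)$ to $\Herm_2(\CC)$ with the determinant form, and invoke Wigner's theorem. On that part there is nothing to add.

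The uniqueness arguments differ. The paper does not pass through the classification $A\mapsto H\bar A^{\top}H^{-1}$; it works intrinsically in $\B$, decomposing it as an $8$-dimensional real quadratic space under $\n$ into its positive- and negative-definite summands, observing that an involution whose symmetric space has signature $(1,3)$ must be of the second kind (so the symmetric space contains $\RR\cdot 1$ and excludes $\sqrt{-1}\RR$), and then arguing that the only negative-definite $3$-plane that can complete this to the symmetric set of an involution is $\RR i\oplus\RR j\oplus\sqrt{-1}\RR ij$. That pins down $\dagger$ literally, as the theorem asserts. Your route yields only uniqueness up to inner conjugation, and calling that ``the correct notion of uniqueness for this theorem'' reinterprets the statement rather than proving it. Your own parametrisation in fact highlights the issue: every positive-definite Hermitian $H$, not only scalar $H$, produces an involution whose symmetric space has signature $(1,3)$ under $\det$, so the step from ``signature $(1,3)$'' to ``$H$ scalar'' is a genuine gap in your outline relative to the stated theorem. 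To match the paper you would need either its direct subspace argument or some additional rigidity showing that among these conjugate involutions only one has symmetric space equal to $\M$ in the given presentation.
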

\pagebreak

Just as in Hamilton's theorem,
	points and isometries are both represented by quaternions
	so that the group action can be written multiplicatively
	via conjugation by an involution.
We use this approach to prove the following facts
	in Theorems \ref{thm:symhyp} and \ref{thm:skewlox},
	and Proposition \ref{prop:extmu}.
	
	\begin{itemize}
		\item A point $p\in\Sym(\B,\dagger)_+^1\sm\{1\}$
				acts on $\Sym(\B,\dagger)_+^1$
				as the hyperbolic isometry that translates by
				$\arcosh(p-p_0)$
				along the geodesic $\widetilde{g}(1,p)$.
		\item A point $p\in\Sk(\dagger,\B)^1$
				acts on $\Sym(\B,\dagger)_+^1$
				as a purely loxodromic isometry
				(or elliptic isometry when $\tr(p)=0$)
				with translation length
				$\big|\arcosh(p-p_0)-\frac{\pi\sqrt{-1}}{2}\big|$,
				rotation angle $\frac{\pi}{2}$
				and axis $\widetilde{g}(1,-p^2)$.
		\item Each element of $\Iso(\hyp^3)$
				is represented by some
				$m\oplus w\in\Sym(\dagger,\B)
					\oplus\Sk(\dagger,\B)$
				and,
				extending $\mu$
				to $\widetilde{\mu}:\B\times\B\rightarrow\B$,
				its isometric action is given by
				$$\mu(p,1)
					=\widetilde{\mu}(m,1)
					+\widetilde\mu(w,1)-[m,w],$$
				where $[m,w]$
				is the commutator of $m$
				and $w$.
	\end{itemize}

In \S\ref{sec:Mob},
	we provide a dictionary
	from the current approach to the conventional \Mob action
	on upper half-space.
This is conveniently given by the map 
\begin{align*}
	\iota:\Sym(\B,\dagger)_+^1\rightarrow\HS^3,\quad
		w+xi+j(y+z\,I\,i)\mapsto\frac{y+zI+J}{w+x}.
\end{align*}

In \S\ref{sec:surf},
	we give an analogous version of the above results
	for hyperbolic surfaces.
	
In \S\ref{sec:gen},
	we give a generalization of the main theorem
	to a broader class of quaternion algebras,
	including some that occur as arithmetic invariants
	of hyperbolic $3$-manifolds.
In particular,
	an adaptation of Theorem \ref{thm:Q}
	is given for quaternion algebras $\quatFd$
	where $F\subset\RR$
	and $a,b,d\in F^+$.
	

\section{Preliminaries}

\subsection{Quaternion Algebras}

A quaternion algebra is a central-simple $4$-dimensional
	algebra,
	but for our purposes we have the following more explicit
	characterization.
Let $K$
	be a field with $\ch(K)\neq2$.

\begin{defn}\label{defn:dick}
	For $a,b\in K^\times$,
		the \emph{quaternion algebra}
		$\quatK$
		is the associative $K$-algebra (with unity)
		$K\oplus Ki\oplus Kj\oplus Kij$
		where $i^2=a$,
		$j^2=b$
		and $ij=-ji$.
\end{defn}

\begin{prop}\label{prop:RC}\cite{Voight2016}\samepage
	\begin{enumerate}
		\item Up to $\RR$-algebra
			isomorphism,
			there are only two quaternion algebras over $\RR$:
			Hamilton's quaternions $\HH:=\Big(\frac{-1,-1}{\RR}\Big)$,
			and $\quatR\cong\mat_2(\RR)$.
		\item For all $a,b\in\CC\sm\{0\}$,
			there is a $\CC$-algebra isomorphism
			$\Big(\frac{a,b}{\CC}\Big)\cong\mat_2(\CC)$.
	\end{enumerate}
\end{prop}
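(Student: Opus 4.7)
The plan is to exploit the basic fact that the isomorphism class of $\quatK$ depends only on the classes of $a$ and $b$ modulo squares in $K^\times$: indeed, rescaling the generators by $i\mapsto\alpha i$ and $j\mapsto\beta j$ preserves the anti-commutation $ij=-ji$ (since $(\alpha i)(\beta j)=\alpha\beta(ij)=-\alpha\beta(ji)=-(\beta j)(\alpha i)$) and transforms the squares as $i^2=a\mapsto\alpha^2 a$ and $j^2=b\mapsto\beta^2 b$.

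For part (1), this reduces the classification over $\RR$ to the four cases $a,b\in\{\pm 1\}$. In each of $\big(\frac{1,1}{\RR}\big)$, $\big(\frac{1,-1}{\RR}\big)$, and $\big(\frac{-1,1}{\RR}\big)$, the generator squaring to $+1$ yields a nontrivial idempotent $(1+i)/2$ or $(1+j)/2$, and one writes an explicit $\RR$-algebra homomorphism to $\mat_2(\RR)$ sending the generator squaring to $1$ to $\diag(1,-1)$ and the other generator to an anti-diagonal matrix whose off-diagonal product equals its square; the quaternion relations are straightforward to verify, and a dimension count shows this is an isomorphism. Finally, $\HH=\big(\frac{-1,-1}{\RR}\big)$ is not isomorphic to $\mat_2(\RR)$ because the reduced norm $\q\mapsto w^2+x^2+y^2+z^2$ is anisotropic over $\RR$, making $\HH$ a division algebra while $\mat_2(\RR)$ has zero divisors.

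For part (2), every element of $\CC^\times$ is a square, so the same scaling reduces $\quatCab$ to $\big(\frac{1,1}{\CC}\big)$, and the matrix identification above (now read over $\CC$) yields $\mat_2(\CC)$. The main obstacle is essentially organizational: verifying that the generator-scaling really is an algebra isomorphism and checking that the explicit matrix images satisfy the full list of quaternion relations. Once those are in place, everything reduces to the observation that over an algebraically closed field there is only one class modulo squares, while over $\RR$ there are two classes but three of the four resulting sign combinations split.
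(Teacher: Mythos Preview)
Your argument is correct. Note, however, that the paper does not supply its own proof of this proposition at all: it is stated with a citation to Voight's book and treated as background. So there is no ``paper's approach'' to compare against; you have simply filled in a standard proof where the paper defers to a reference.

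Two minor remarks on presentation. First, you mention the idempotent $(1+i)/2$ but never actually use it; your real argument is the explicit matrix homomorphism, and the idempotent observation is a red herring as written. Second, ``a dimension count shows this is an isomorphism'' is not quite self-contained: a linear map between spaces of equal finite dimension is an isomorphism only once you know it is injective or surjective. Here the cleanest fix is to observe that the images of $1,i,j,ij$ are linearly independent in $\mat_2(\RR)$ (they are, respectively, the identity, a diagonal traceless matrix, an anti-diagonal matrix, and another anti-diagonal matrix independent of the previous one), so the map is surjective and hence bijective. Alternatively, one can invoke the simplicity of quaternion algebras to get injectivity for free, but that is a heavier fact than the rest of your argument uses.
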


%

%

\begin{defn}\samepage\label{defn:norm trace}
	Let $q=w+xi+yj+zij\in\quatK$
		where $w,x,y,z\in K$.
	\begin{enumerate}
		\item
		The \emph{quaternion conjugate}
			of $q$
			is $q^*:=w-xi-yj-zk$.
		\item
		The \emph{(reduced) norm}
			of $q$
			is $\n(q):=qq^*=w^2-ax^2-by^2+abz^2$.
		\item
		The \emph{(reduced) trace}
			of $q$
			is $\tr(q):=q+q^*=2w$.
		\item
		$q$ is a \emph{pure quaternion}
			when $\tr(q)=0$.
	\end{enumerate}
\end{defn}

\begin{prop}\label{prop:tracenorm}\cite{Voight2016}
	If $K\subset\CC$,
		then under any matrix representation of $\quatK$
		into $\mat_2(\CC)$,
		$\n$
		and $\tr$
		correspond to the matrix determinant and trace,
		respectively.
\end{prop}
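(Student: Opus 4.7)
My plan is to derive the proposition from a single intrinsic quadratic identity satisfied by every $q \in \quatK$. Using $q + q^* = \tr(q)$ and $qq^* = \n(q)$ from Definition~\ref{defn:norm trace}, one has $q - \tr(q) = -q^*$, so
\[
	q^2 - \tr(q)\,q + \n(q) \;=\; q\bigl(q - \tr(q)\bigr) + qq^* \;=\; -qq^* + qq^* \;=\; 0.
\]
This \emph{reduced characteristic polynomial} is intrinsic to $\quatK$ and refers to no matrix realization.

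Next, let $\rho \colon \quatK \to \mat_2(\CC)$ be a matrix representation, i.e.\ a $K$-algebra homomorphism (where $\mat_2(\CC)$ becomes a $K$-algebra via the inclusion $K \subset \CC$). Since $\quatK$ is central simple of dimension four, $\rho$ is automatically injective; setting $M := \rho(q)$ and applying $\rho$ to the identity above yields $M^2 - \tr(q)\,M + \n(q)\,I = 0$. For $q \in K$ one has $M = qI$ and the claim is immediate. If instead $q \notin K$, then $M$ cannot be scalar: otherwise $M = \lambda I$ would give $\rho(rq) = M\rho(r) = \rho(r)M = \rho(qr)$ for every $r \in \quatK$, and injectivity of $\rho$ would force $q$ into the center $K$, a contradiction. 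Hence the minimal polynomial of $M$ over $\CC$ has degree exactly two, so it coincides with the polynomial displayed above as well as with the Cayley--Hamilton polynomial $X^2 - (\tr M)\,X + \det M$. Matching coefficients gives $\tr(q) = \tr M$ and $\n(q) = \det M$.

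The only non-formal step is the non-scalarity of $M$ when $q \notin K$, which dispatches in one line via centrality, so I expect no real obstacle. A parallel reduction is available through Proposition~\ref{prop:RC}(2) and the Skolem--Noether theorem: any two $\CC$-algebra isomorphisms $\quatCab \to \mat_2(\CC)$ differ by conjugation in $\mat_2(\CC)$, under which trace and determinant are invariant. It would then suffice to verify the proposition for a single explicit embedding, e.g.\ $i \mapsto \diag(\sqrt{a}, -\sqrt{a})$, $j \mapsto \begin{pmatrix} 0 & 1 \\ b & 0 \end{pmatrix}$, by direct computation on a general $q = w + xi + yj + zij$.
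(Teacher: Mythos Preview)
The paper does not actually prove this proposition; it is simply quoted as a cited fact from Voight's book, with no proof given. Your argument via the reduced characteristic polynomial $q^2-\tr(q)\,q+\n(q)=0$ is correct and is the standard one: injectivity of $\rho$ (from simplicity of $\quatK$) together with your centrality argument shows that $\rho(q)$ is non-scalar whenever $q\notin K$, so the degree-two polynomial above must coincide with the Cayley--Hamilton polynomial of $\rho(q)$. The Skolem--Noether alternative you sketch is also valid, once one notes that a $K$-algebra map $\quatK\to\mat_2(\CC)$ extends by base change along $K\subset\CC$ to a $\CC$-algebra isomorphism $\quatCab\to\mat_2(\CC)$, so that any two representations differ by an inner automorphism of $\mat_2(\CC)$.
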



We will indicate conditions on the trace via subscript,
	for instance the set of pure quaternions in a subset $E\subset\quatK$
	is $E_0=\big\{q\in S\mid \tr(q)=0\big\}$
	and those having positive trace is $E_+=\big\{q\in S\mid\tr(q)>0\big\}$.
We will indicate conditions on the norm via superscript,
	for instance $E^1=\big\{q\in S\mid \n(q)=1\big\}$.

\subsection{Algebras with Involution}

Let $K$
	be a field where $\mathrm{char}(K)\neq2$
	and let $A$
	be a central simple $K$-algebra.
	
\begin{defn}\label{defn:involution}
	An \emph{involution}
		on $A$
		is a map $\star :A\rightarrow A: x\mapsto x^\star $
		such that $\forall\  x,y\in A$,
		we have
		$(x+y)^\star =x^\star +y^\star $,
		$(xy)^\star =y^\star x^\star $ and
		$(x^\star )^\star =x$.
\end{defn}

\begin{defn}\samepage
	Let $E\subset A$
		and let $\star$
		be an involution on $A$.
	\begin{enumerate}
		\item The \emph{symmetric elements of $\star $ in $E$},
			is $\mathrm{Sym}(E,\star):=\{x\in A\mid x^\star =x\}$.
  	      	\item The \emph{skew-symmetric elements of $\star$
        			in $E$} is
        			$\mathrm{Skew}(E,\star ):=\{x\in E\mid x^\star =-x\}$.	
		\item $\star $
			is \emph{of the first kind}
			if $K=\mathrm{Sym}(K,\star)$.
		\item $\star $
			is \emph{standard}
			if it is of the first kind and $\forall\  x\in A$,
				$xx^\star \in K$.
		\item $\star $
			is \emph{of the second kind}
			if $K\neq\mathrm{Sym}(K,\star)$.
	\end{enumerate}
\end{defn}

\begin{ex}\cite{KnusEtc1998}\label{ex:involutions}
	\begin{enumerate}
		\item Quaternion conjugation $*$
				is the unique standard involution
				on a quaternion algebra.
		\item If $A$
				is a quaternion algebra over $K$,
				then $\mathrm{Sym}(A,*)=K$
				and $\mathrm{Skew}(A,*)=A_0$.
		\item Matrix transposition on the $\CC$-algebra
				$\mat_n(\CC)$
				is an involution of the first kind,
				but not a standard involution.
		\item The conjugate transpose on the $\CC$-algebra
				$\mat_n(\CC)$
				is an involution of the second kind.
	\end{enumerate}
\end{ex}

\begin{prop}\cite{KnusEtc1998}\label{prop:involutions}
	Let $\star$
		be an involution on $A$.
	\begin{enumerate}
		\item $1^\star =1$.
		\item If $\star $
			is of the first kind,
			then it is $K$-linear.
		\item If $\star $
			is of the second kind,
			then $[K:\mathrm{Sym}(K,\star)]=2$.
		\item $A=\mathrm{Sym}(A,\star)\oplus\mathrm{Skew}(A,\star )$.
	\end{enumerate}
\end{prop}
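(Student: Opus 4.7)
The plan is to verify the four claims in sequence, since each is quite short and (2) and (3) share a key preliminary ingredient.

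For (1), I would apply $\star$ to the identity $1\cdot x = x = x\cdot 1$ for arbitrary $x\in A$, obtaining $x^\star\cdot 1^\star = x^\star = 1^\star\cdot x^\star$. Since $\star$ is involutive and hence a bijection of $A$, as $x$ ranges over $A$ so does $x^\star$; therefore $1^\star$ is a two-sided identity for the multiplication of $A$, and uniqueness of the identity forces $1^\star = 1$.

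For (2) and (3) I first need the auxiliary fact that $\star$ stabilises the center $Z(A) = K$. This drops out of anti-multiplicativity: if $\lambda\in K$ and $x\in A$, then $\lambda x = x\lambda$ implies $x^\star\lambda^\star = \lambda^\star x^\star$, and surjectivity of $\star$ shows $\lambda^\star$ commutes with everything, so $\lambda^\star\in K$. For (2), if $\star$ is of the first kind then $\lambda^\star = \lambda$ on $K$, and hence $(\lambda x)^\star = x^\star\lambda^\star = \lambda x^\star$ by centrality of $\lambda$; combined with additivity from Definition~\ref{defn:involution}, this is exactly $K$-linearity. For (3), the restriction $\star|_K$ is additive, multiplicative (since $K$ is commutative, anti-multiplication collapses to multiplication there), and of order dividing $2$; it is therefore a nontrivial field automorphism of order $2$ in the second-kind case, so elementary Galois theory gives $[K:\mathrm{Sym}(K,\star)] = 2$.

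For (4) the hypothesis $\mathrm{char}(K)\neq 2$ makes the standard symmetric-skew decomposition
\[
	x = \tfrac{1}{2}(x+x^\star) + \tfrac{1}{2}(x-x^\star)
\]
available for every $x\in A$, with the two summands visibly lying in $\mathrm{Sym}(A,\star)$ and $\mathrm{Skew}(A,\star)$ respectively. Directness of the sum is immediate: an element $s\in\mathrm{Sym}(A,\star)\cap\mathrm{Skew}(A,\star)$ satisfies $s = -s$, hence $s = 0$. The only step that is not completely mechanical is the preliminary observation that $\star$ sends $K$ into $K$; this is the keystone shared by (2) and (3), and it is the one place where centrality of $K$ genuinely enters, so I would isolate it as a lemma before proving either of those two parts.
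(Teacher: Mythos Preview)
Your argument is correct and entirely standard. Note, however, that the paper does not supply its own proof of this proposition: it is stated with a citation to \cite{KnusEtc1998} and used as background. There is therefore nothing in the paper to compare your approach against; the proof you give is essentially the one found in that reference.
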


\subsection{Macfarlane's Hyperbolic Quaternions}  

To understand Macfarlane's construction,
	we first look at the classical notation used by Hamilton.
There we see the quaternions defined as the $\RR$-algebra
	generated by $1, i, j, k$
	with the following multiplication rules.
\begin{gather*}
		i^2=j^2=k^2=-1,\\
		ij=k=-ji, \qquad jk=i=-kj, \qquad ki=j=-ik.
\end{gather*}
Hamilton's model for rotations in $\RR^3$
	relied on showing relationships between quaternion multiplication
	and spherical trigonometry.

Macfarlane \cite{Macfarlane1900}
	proposed the following variation on Hamilton's multiplication rules.
\begin{align}
	\begin{gathered}\label{mac}
	i^2=j^2=k^2=1,\\
	ij=\sqrt{-1}k=-ji, \qquad jk=\sqrt{-1}i=-kj, \qquad ki=\sqrt{-1}j=-ik.
	\end{gathered}
\end{align}
In analogy to Hamilton's work,
	Macfarlane showed how this system relates to hyperbolic trigonometry.
He did this by working with $3$-dimensional hyperboloids
	lying in the space $\{w+xi+yj+zk\mid w,x,y,z\in\RR\}$,
	which naturally admits the structure of Minkowski spacetime
	(defined in the following subsection).
Figure \ref{fig:mac}
	is a drawing from Macfarlane's paper \cite{Macfarlane1900}
	depicting what would later be termed the light cone,
	and light-like and time-like vectors.

\begin{figure}[h]\centering
	\includegraphics[scale=0.15]{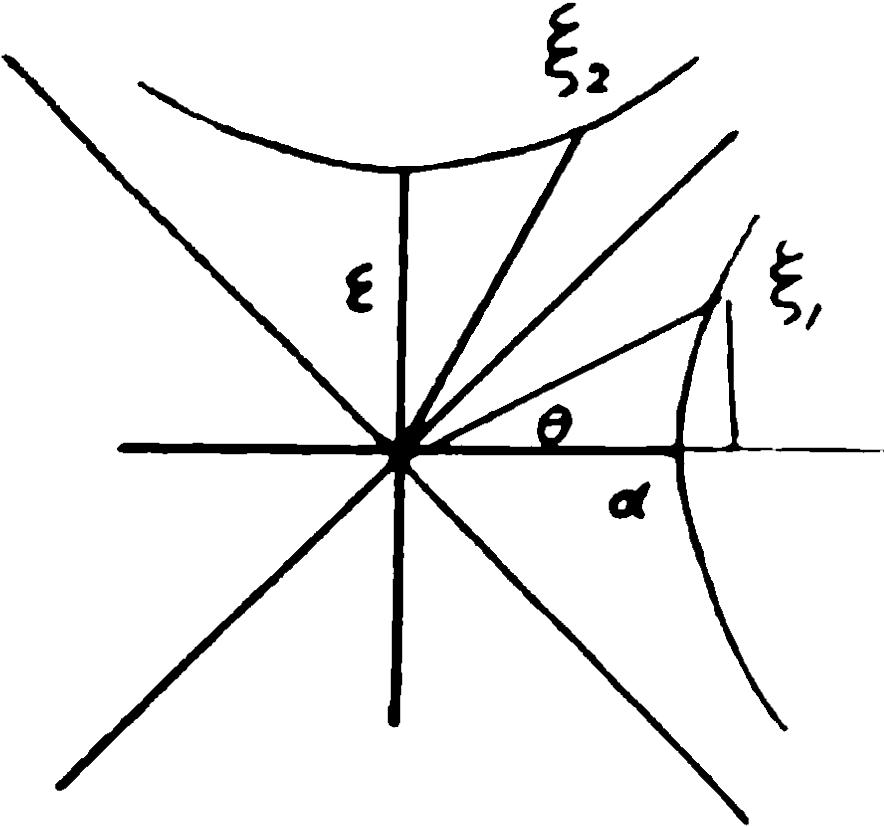}
	\caption{Macfarlane's quaternion hyperboloids.}
	\label{fig:mac}
\end{figure}
	
On the other hand,
	$\{w+xi+yj+zk\mid w,x,y,z\in\RR\}$
	is not an algebra,
	nor is it closed under multiplication,
	and it is unclear in Macfarlane's work
	how to reconcile the occurrence of $\sqrt{-1}$
	as a scalar with the use of $\RR$
	as the base field.

\subsection{Wigner's Spinor Representation}

\emph{Minkowski spacetime},
	denoted by $\RR^{(1,3)}$,
	is the quadratic space $(\RR^4,\phi)$
	where $\phi$
	is the standard quadratic form of signature $(1,3)$
\begin{align}\label{phi}
	\phi:\RR^4\rightarrow\RR,\quad
		(v_0,v_1,v_2,v_3)\mapsto v_0^2-\sum_{\ell=1}^3v_\ell^2.
\end{align}
The \emph{(standard)
		hyperboloid model for hyperbolic $3$-space}
	is
\begin{align*}
	\mathcal{I}^3:=
		\big\{(v_0,v_1,v_2,v_3)=v\in\RR^{(1,3)}\mid\phi(v)=1,v_0>0\big\},
\end{align*}
	equipped with the metric induced by the restriction of $\phi$
	to the tangent space.
	
Wigner identified $\RR^{(1,3)}$
	with the $2\times2$
	Hermitian matrices
	$$\Herm_2(\CC):=\big\{m\in\mat_2(\CC)\mid\overline{m}^\top=m\big\}$$
	where the isometric quadratic form is the determinant.
He used this to establish the spinor representation
	of $\SO(1,3)$
	into $\kl$ \cite{Carmeli1977}.
	
That construction implies the following way of computing
	the orientation-preserving isometries of $\HM^3$,
	where we have changed coordinates
	to make later computations more convenient.
Let 
\begin{align*}
	\eta:\RR^{(1,3)}\rightarrow \big(\Herm_2(\CC),\det\big),\quad
		(w,x,y,z)\mapsto\begin{pmatrix}
				w-x & y-\sqrt{-1}z\\
				y+\sqrt{-1}z & w+x
		\end{pmatrix}.
\end{align*}
The map $\eta$
	is a quadratic space isometry under which
	$\{m\in\Herm_2(\CC)\mid\det(m)=1\}/\{\pm1\}$
	identifies naturally with $\mathcal{I}^3$,
	and the relevant consequence of Wigner's work is as follows.

\begin{thm}[Wigner, 1937 \cite{Carmeli1977}]
	\label{thm:wigner}
	An isomorphism $\Iso(\hyp^3)\cong\kl$
		is defined by the group action
	\begin{align*}
		\kl\lefttorightarrow\HM^3,\quad
			(m,p)\mapsto\eta^{-1}\big(m\eta(p)\overline{m}^\top\big).
	\end{align*}
\end{thm}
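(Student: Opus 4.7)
The plan is to verify the theorem in four stages: well-definedness of the action on $\HM^3$, the isometry property, the homomorphism and kernel, and finally surjectivity.

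First I would check that for $m\in\SL_2(\CC)$ and $p\in\HM^3$, the matrix $m\eta(p)\overline{m}^\top$ lies in $\eta(\HM^3)$. Hermitianness is inherited from $\eta(p)$; the determinant equals $\det(m)\,\overline{\det(m)}\det(\eta(p))=1$; and the condition $w>0$ corresponds to positive-definiteness of $\eta(p)$ (determinant $1$ and trace $2w>0$ force both eigenvalues positive), which congruence by the invertible matrix $m$ preserves.

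Second, the induced map is automatically a hyperbolic isometry because $\eta$ is a quadratic-space isometry $(\RR^{(1,3)},\phi)\to(\Herm_2(\CC),\det)$ by construction and the congruence $X\mapsto mX\overline{m}^\top$ preserves $\det$. The homomorphism property of the assignment $m\mapsto\big(p\mapsto\eta^{-1}(m\eta(p)\overline{m}^\top)\big)$ is immediate from $\overline{mn}^\top=\overline{n}^\top\,\overline{m}^\top$. For the kernel, if $m$ acts trivially, evaluating at $p=(1,0,0,0)$ (so $\eta(p)=I$) forces $m\overline{m}^\top=I$, whence $m$ is unitary and the action becomes ordinary conjugation $X\mapsto mXm^{-1}$. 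Since $\eta(\HM^3)$ is not contained in any proper real subspace of $\Herm_2(\CC)$, its real-linear span is all of $\Herm_2(\CC)$, which in turn $\CC$-spans $\mat_2(\CC)$; hence $m$ commutes with all of $\mat_2(\CC)$ and must be scalar, and $\det(m)=1$ forces $m=\pm I$. This yields an injection $\PSL_2(\CC)\hookrightarrow\Iso(\HM^3)$.

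The main obstacle is surjectivity. The cleanest route is Lie-theoretic: both $\PSL_2(\CC)$ and $\Iso(\HM^3)\cong\SO^+(1,3)$ are connected real Lie groups of dimension $6$, so a continuous injective homomorphism between them must have injective derivative at the identity between equal-dimensional tangent spaces, hence an isomorphism there; by the inverse function theorem the image is open, and an open subgroup of a connected topological group is the whole group. For a more constructive alternative, one can exhibit explicit preimages in $\SL_2(\CC)$ for a generating set of $\SO^+(1,3)$: the spatial rotations lift via the classical double cover $\mathrm{SU}(2)\to\SO(3)$ acting on the traceless Hermitian matrices, and the boosts along each axis lift via positive-definite Hermitian elements of $\SL_2(\CC)$, which together generate $\SO^+(1,3)$ by the Cartan/polar decomposition.
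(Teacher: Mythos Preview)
The paper does not prove this statement; it is quoted as a classical result of Wigner with a citation to \cite{Carmeli1977} and then invoked as a black box in the proof of Theorem~\ref{thm:precise}. There is therefore no paper argument to compare against.

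Your proposal is correct. One point to tighten: in this paper $\Iso(\hyp^3)$ denotes the group of \emph{orientation-preserving} isometries, so after verifying that each $m\in\SL_2(\CC)$ acts isometrically on $\HM^3$ you should also record that the action is orientation-preserving; this follows at once from the connectedness of $\SL_2(\CC)$, which you already invoke in the surjectivity step. Otherwise the well-definedness check, the kernel computation, and both surjectivity arguments (the dimension count and the explicit-generators alternative) are sound.
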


\begin{rem}
	This assumes a choice of representative
		$m\in\SL_2(\CC)$
		of $\{\pm m\}\in\kl$,
		but that choice has no effect on the product so we ignore
		this distinction.
	Where it will not cause confusion,
		we will write similarly when discussing projective quaternions.
\end{rem}

\section{Proof of Main Result}\label{sec:main}

To prove Theorem \ref{thm:Q},
	we update Macfarlane's construction to the notation of
	modern quaternion algebra theory
	and then show how his object of interest
	identifies with Minkowski spacetime
	as a subspace of the complex quaternion algebra.
We use this to develop some notation,
	then give the more precise statement of the result,
	in Theorem \ref{thm:precise}.	
We then finish the proof using a reinterpretation of Wigner's theorem
	in the context of quaternions.
	
	
Recall from (\ref{mac}),
	Macfarlane's equation
\begin{gather*}
	ij=\sqrt{-1}k.
\end{gather*}
Solving for $k$
	in terms of $i$
	and $j$
	gives
\begin{gather*}
	k=-\sqrt{-1}ij.
\end{gather*}
Making this substitution throughout leaves the other equations in (\ref{mac})
	intact,
	that is
\begin{align*}
	k^2 &= (-\sqrt{-1}ij)^2=-ijij=ij^2i=i^2=1,\\
	jk &= j(-\sqrt{-1}ij)=-\sqrt{-1}jij=\sqrt{-1}ij^2=\sqrt{-1}i,\\
	ki &= (-\sqrt{-1}ij)i=\sqrt{-1}i^2j=\sqrt{-1}j.
\end{align*}
We may thus drop $k$
	and more concisely capture Macfarlane's multiplication rules by
\begin{align*}
	i^2=j^2=1,\quad ij=-ji.
\end{align*}
This agrees with the choice $a=b=1$
	in the notation of Definition \ref{defn:dick}.
Also,
	since the real numbers as well as $\sqrt{-1}$
	are permitted as scalars,
	we are working over the field $\CC$.

Putting this together,
	we can recognize the object Macfarlane studied as a real vector space
	embedded in the complex quaternion algebra,
	particularly the space
\begin{gather*}
	\Sp_\RR(1,i,j,k)=\Sp_\RR(1,i,j,-\sqrt{-1}ij)\subset\quatC.
\end{gather*}
So let $\B=\quatC$,
	let $\n$
	be the quaternion norm on $\B$,
	and we have motivated the following definition.
\begin{defn}\label{defn:macstand}
	The \emph{(standard) Macfarlane space}
		is the normed vector space
	\begin{gather*}
		\M:=\RR\oplus\RR i\oplus\RR j\oplus\RR\sqrt{-1}ij\subset\BB
	\end{gather*}
		over $\RR$,
		where the norm is the restriction $\n|_\M$.
\end{defn}

Then define the metric space
\begin{gather}\label{M+1}
	\M_+^1:=\big\{p\in \M\mid \n(p)=1, \tr(p)>0\big\}
\end{gather}
	where the metric is induced by the restriction of $\n$
	to the tangent space. 
	
\begin{prop}\label{prop:machyp}
	$\M$
		identifies naturally with Minkowski spacetime,
		and $\M_+^1$
		is a hyperboloid model for hyperbolic $3$-space.
\end{prop}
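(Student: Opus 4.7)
The plan is to work in explicit real coordinates on $\M$, compute the restriction of the quaternion norm, and observe that the result is precisely the Minkowski quadratic form $\phi$ of (\ref{phi}). Writing a general element $p\in\M$ as $p = w + xi + yj + z\sqrt{-1}\,ij$ with $w,x,y,z\in\RR$, and applying the norm formula of Definition \ref{defn:norm trace} in $\B$ (so $a=b=1$), treating the coefficient of $ij$ as the complex scalar $z\sqrt{-1}$, I would obtain
\[
\n(p) \;=\; w^2 - x^2 - y^2 + \bigl(z\sqrt{-1}\bigr)^2 \;=\; w^2 - x^2 - y^2 - z^2.
\]
This is the computational heart of the proposition: the sign flip on the $z^2$ term is precisely the consequence of Macfarlane's choice of $\sqrt{-1}\,ij$ as the fourth real basis vector, and it has the further (important) consequence that $\n|_\M$ takes values in $\RR$, so that $(\M,\n|_\M)$ is genuinely a real quadratic space.

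With this in hand, the first assertion is immediate. The $\RR$-linear bijection $\RR^4\to\M$ sending $(w,x,y,z)$ to $w+xi+yj+z\sqrt{-1}\,ij$ carries the standard basis of $\RR^4$ to the distinguished real basis of $\M$ and, by the display above, intertwines $\phi$ with $\n|_\M$. Hence it is an isometry of real quadratic spaces $\RR^{(1,3)}\cong\M$, which is the natural identification claimed.

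For the second assertion, I would simply unwind the defining conditions. Under the identification of the previous paragraph, $\tr(p)=2w$, so the conditions $\n(p)=1$ and $\tr(p)>0$ defining $\M_+^1$ in (\ref{M+1}) pull back to $\phi(v)=1$ and $v_0>0$, which are exactly the conditions defining $\HM^3$. Both $\HM^3$ and $\M_+^1$ carry the metric induced by restricting the ambient quadratic form to the tangent space, and because the identification is already an isometry of the ambient quadratic spaces, it automatically descends to an isometry $\HM^3\to\M_+^1$ of the induced Riemannian manifolds. The only real subtlety is the norm calculation in the first paragraph; everything else is bookkeeping, and there is no serious obstacle to overcome.
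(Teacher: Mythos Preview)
Your proposal is correct and follows essentially the same approach as the paper: write $p=w+xi+yj+z\sqrt{-1}\,ij$, compute $\n(p)=w^2-x^2-y^2-z^2$ from Definition~\ref{defn:norm trace} with $a=b=1$, identify $(\M,\n|_\M)$ with $\RR^{(1,3)}$ via the obvious coordinate map, and then match the conditions $\n(p)=1$, $\tr(p)>0$ with $\phi(v)=1$, $v_0>0$ defining $\mathcal{I}^3$. If anything, you are slightly more explicit than the paper in noting that $\n|_\M$ is real-valued and that the ambient isometry descends to the induced tangent metrics.
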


\begin{proof}
	Let $p=w+xi+yj+\sqrt{-1}zij\in \M$
		where $w,x,y,z\in\RR$
		and refer to Definition \ref{defn:norm trace},
		taking $a=b=1$.
	\begin{gather*}
		\n(p)=w^2-x^2-y^2+(\sqrt{-1}z)^2=w^2-x^2-y^2-z^2.
	\end{gather*}
	Thus $\M$
		identifies with $\RR^{(1,3)}$
		naturally via $w+xi+yj+\sqrt{-1}zij\leftrightarrow(w,x,y,z)$,
		and 
		$\n$
		is isometric to $\phi$
		from (\ref{phi}).
	The conditions $\n(p)=1$
		and $\tr(p)>0$
		in $\M_+^1$
		then correspond respectively with the conditions $\phi(v)=1$
		and $v_0>0$
		in the definition of $\mathcal{I}^3$.
\end{proof}


Next we introduce the involution which will give the desired action
	by isometries.

\begin{defn}
	The \emph{Macfarlane involution},
		denoted by $\dagger$,
		is the involution of the second kind on $\B$
		whose set of symmetric elements is $\M$.
\end{defn}
Equivalently,
	for $q=w+xi+yj+zij\in\B$
	with $w,x,y,z\in\CC$,
	the Macfarlane involution
	is the extension of complex conjugation given by
	$$q\ct=\overline{w}+\overline{x}i+\overline{y}j
				-\overline{z}ij.$$
			
\begin{thm}\label{thm:precise}
	The Macfarlane involution $\dagger$
		is the unique involution on $\B$
		such that $\Sym(\dagger,\B)$
		is a quadratic $\RR$-space of signature $(1,3)$.
	Moreover,
		an isomorphism
		$\Proj\B^1\cong\Iso(\hyp^3)$
		is defined by the group action
		$$\mu:\Proj\B^1\lefttorightarrow\M_+^1,\quad
			(u,p)\mapsto upu^\dagger.$$
\end{thm}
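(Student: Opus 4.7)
The plan is to reduce Theorem~\ref{thm:precise} to Wigner's Theorem~\ref{thm:wigner} by constructing a single explicit $\CC$-algebra isomorphism $\Phi:\B\xrightarrow{\sim}\mat_2(\CC)$ that simultaneously identifies $\n$ with $\det$, $\M$ with $\Herm_2(\CC)$, the Macfarlane involution $\dagger$ with the conjugate transpose, and therefore $\Proj\B^1$ with $\kl$ and $\M_+^1$ with $\HM^3$. Concretely I would send $i\mapsto-\diag(1,-1)$ and $j\mapsto\bigl(\begin{smallmatrix}0 & 1\\1 & 0\end{smallmatrix}\bigr)$, the Pauli-type matrices satisfying $i^2=j^2=1$ and $ij=-ji$, which extend uniquely to a $\CC$-algebra isomorphism by Proposition~\ref{prop:RC}(2).

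Evaluating $\Phi$ on the real basis $\{1,i,j,\sqrt{-1}ij\}$ of $\M$, one checks that $\Phi|_\M$ is precisely the map $\eta$ introduced just before Theorem~\ref{thm:wigner}; Proposition~\ref{prop:machyp} then upgrades this to the isometry $(\M,\n|_\M)\cong(\Herm_2(\CC),\det)$ and the bijection $\M_+^1\cong\HM^3$. The intertwining $\Phi(q^\dagger)=\overline{\Phi(q)}^\top$ follows by checking the same four basis vectors, since both sides are $\CC$-antilinear in $q$. With these identifications in place, $\mu(u,p)=upu^\dagger$ transports under $\Phi$ to the action $(m,p)\mapsto m\,p\,\overline{m}^\top$ of Theorem~\ref{thm:wigner}. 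This shows $\mu$ is well defined (its image lies in $\M_+^1$), factors through $\Proj\B^1$ (using $(-u)p(-u)^\dagger=upu^\dagger$), and induces an isomorphism $\Proj\B^1\cong\Iso(\hyp^3)$.

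For uniqueness of $\dagger$, I would first rule out involutions of the first kind: by Proposition~\ref{prop:involutions}(2) any such is $\CC$-linear, so $\Sym$ has even real dimension; Example~\ref{ex:involutions} gives $\Sym=\CC$ (real dimension~$2$) for the standard involution, while any non-standard first-kind involution on $\B\cong\mat_2(\CC)$ is of orthogonal type with $\Sym$ of $\CC$-dimension~$3$ (real dimension~$6$). Neither can carry a form of signature~$(1,3)$. Thus a candidate $\star$ must be of the second kind, so $\Sym(\B,\star)$ has real dimension~$4$; after identifying it with the canonical Minkowski subspace $\M$, Proposition~\ref{prop:involutions}(4) forces $\star$ to act as the identity on $\M$ and as $-1$ on $\sqrt{-1}\,\M$, recovering $\dagger$.

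The principal obstacle is the consistent bookkeeping of the first step: the chosen $\Phi$ must identify four structures at once (algebra, real subspace, involution, quadratic form) with their Wigner counterparts, and a wrong sign in $\Phi(i)$ or $\Phi(j)$ would send $\Phi|_\M$ to a coordinate permutation of $\eta$ rather than to $\eta$ itself, forcing a subsequent correction by an outer automorphism of $\hyp^3$ before Theorem~\ref{thm:wigner} can be invoked cleanly. Once the four compatibilities are verified on basis elements the remainder of the argument is essentially formal.
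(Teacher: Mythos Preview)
Your construction of $\Phi$ and the reduction to Wigner's Theorem is essentially the paper's own argument: your $\Phi$ coincides with the map the paper calls $\rho$ (equation~(\ref{rho})), and both proofs verify on generators that it is a $\CC$-algebra isomorphism carrying $\dagger$ to the conjugate transpose, then invoke Theorem~\ref{thm:wigner}. So that half of your plan is correct and matches the paper.

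The uniqueness argument, however, has a genuine gap. After establishing that any candidate $\star$ must be of the second kind with $\dim_\RR\Sym(\B,\star)=4$, you write ``after identifying it with the canonical Minkowski subspace $\M$'' and immediately recover $\dagger$. But knowing that $\Sym(\B,\star)$ is a $4$-dimensional real space of signature $(1,3)$ only tells you it is \emph{abstractly isometric} to $\M$; it does not tell you that $\Sym(\B,\star)=\M$ as a subspace of $\B$, and it is precisely this equality of subspaces that forces $\star=\dagger$. The paper fills this in by writing $\B$ as an $8$-dimensional real space and splitting it into a positive-definite summand $\RR\oplus\sqrt{-1}\RR i\oplus\sqrt{-1}\RR j\oplus\RR ij$ and a negative-definite summand $\sqrt{-1}\RR\oplus\RR i\oplus\RR j\oplus\sqrt{-1}\RR ij$; since a second-kind involution must fix $\RR$ and negate $\sqrt{-1}\RR$, the single positive direction of $\Sym$ is pinned to $\RR$, and the paper then argues that the three negative directions are forced to be $\RR i\oplus\RR j\oplus\sqrt{-1}\RR ij$. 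Your dimension count for first-kind involutions is a reasonable alternative to the paper's one-line dismissal of that case, but it does not substitute for this positional argument in the second-kind case.
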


\begin{proof}
	The involution $\dagger$
		is uniquely determined by its symmetric space $\M$,
		and $\M$
		is the only subspace of $\B$
		having signature $(1,3)$
		that can be fixed by an involution.
		
	To see this,
		recall that 
		for $q=w+xi+yj+zij\in\B$
		with $w,x,y,z\in\RR(\sqrt{-1})$,
		the quaternion norm is $\n(q)=w^2-x^2-y^2+z^2$,
		so that as an $8$-dimensional
		$\RR$-space
		$$\B=\big(
			\RR\oplus\sqrt{-1}\RR i\oplus\sqrt{-1}\RR j\oplus\RR ij\big)
			\oplus\big(
			\sqrt{-1}\RR\oplus\RR i\oplus\RR j\oplus\sqrt{-1}\RR ij
			\big)$$
		is a decomposition of $\B$
		into its positive-definite and negative-definite subspaces,
		respectively.
	For an involution of $\B$
		to have a symmetric space with
		non-trivial signature,
		the involution must be of the second kind,
		forcing the symmetric space to include the summand $\RR$
		but not $\sqrt{-1}\RR$.
	This accounts for the $1$-dimensional
		positive-definite portion of the symmetric space.
	Then the only option for the remaining $3$-dimensional
		negative-definite portion is $\RR i\oplus\RR j\oplus\sqrt{-1}\RR ij$.
	This forces the symmetric space to be $\M$
		and hence the involution to be $\dagger$.

	By Proposition \ref{prop:machyp},
		we know that $\M_+^1$
		is a hyperboloid model for $\hyp^3$,
		so to realize the isomorphism $\Proj\B^1\cong\Iso(\hyp^3)$
		via $\Proj\B^1\times \M_+^1\rightarrow \M_+^1,
				(u,p)\mapsto upu^\dagger,$
		consider the map
	\begin{align}\label{rho}
		\rho:\B\rightarrow\mat_2(\CC),\quad
			w+xi+yj+zij\mapsto\begin{pmatrix}
				w-x & y-z\\
				y+z & w+x
			\end{pmatrix}.
	\end{align}
	We show that $\rho$
		is a bijective $\CC$-algebra
		isomorphism that transfers the Macfarlane involution
		to the complex conjugate transpose.
	From there the result follows by
		applying Wigner's Theorem \ref{thm:wigner}.
	
	Observe that $\rho$
		takes the quaternion trace and norm to the matrix
		trace and determinante, respectively.
	Also,
		since $\rho$
		is linear in $w,x,y,z$,
		it preserves addition,
		thus it is a $\CC$-algebra homomorphism because it
		preserves the multiplication laws on $i$
		and $j$,
		as follows.
	\begin{align*}
		\rho(i)^2&=\begin{pmatrix}-1 & 0\\
				0 & 1
			\end{pmatrix}^2
			=\begin{pmatrix}
				1 & 0\\
				0 & 1
			\end{pmatrix}
			=\rho(1)=\rho(i^2).\\
		\rho(j)^2&=\begin{pmatrix}0 & 1\\
				1 & 0
			\end{pmatrix}^2
			=\begin{pmatrix}
				1 & 0\\
				0 & 1
			\end{pmatrix}
			=\rho(1)=\rho(j^2).\\
		\rho(ij)&=\begin{pmatrix}0 & -1\\
				1 & 0
			\end{pmatrix}
			=\begin{pmatrix}-1 & 0\\
				0 & 1
			\end{pmatrix}
			\begin{pmatrix}0 & 1\\
				1 & 0
			\end{pmatrix}
			=\rho(i)\rho(j).\\
		\rho(-ij)&=\begin{pmatrix}0 & 1\\
				-1 & 0
			\end{pmatrix}
			=\begin{pmatrix}0 & 1\\
				1 & 0
			\end{pmatrix}
			\begin{pmatrix}-1 & 0\\
				0 & 1
			\end{pmatrix}
			=\rho(j)\rho(i).
	\end{align*}
	
	Next,
		$\rho$
		is injective because if $\rho_\B(w+xi+yj+zij)=\begin{pmatrix}
				0 & 0\\
				0 & 0
			\end{pmatrix}$,
		then
	\begin{align*}
		w-x=0 &\qquad y-z=0\\
		w+x=0 &\qquad y+z=0
	\end{align*}
		which gives $w=x=y=z=0$.
	This implies $\rho$
		is surjective as well because it is a map betwen $\CC$-spaces
		of the same dimension.
	And lastly,
	\begin{align*}
		\rho\big((w+xi+yj+zij)\ct\big)
			&=\rho(\overline{w}+\overline{x}i
				+\overline{y}j-\overline{z}ij)\\
			&=\begin{pmatrix}
					\overline{w}-\overline{x}
						& \overline{y}+\overline{z}\\
					\overline{y}-\overline{z}
						& \overline{w}+\overline{x}
				\end{pmatrix}\\
			&=\overline{\rho(w+xi+yj+zij)}^\top.
	\end{align*}	
\end{proof}
\section{Isometries as Points}\label{sec:points}

By Proposition \ref{prop:tracenorm},
	the usual classification of isometries by trace \cite{Ratcliffe1994}
	applies in our quaternion model.

\begin{defn}\samepage
    Let $q\in\Proj\B^1$
    	(represented up to sign in $\B^1$).
	\begin{enumerate}
    		\item	$q$
    				is \emph{elliptic} when
    				$\tr(q)\in\RR$ and $|\tr(q)|<2$.
    		\item $q$ is \emph{parabolic} when
    				$\tr(q)=\pm 2$.
    		\item $q$ is \emph{loxodromic} when $\tr(q)\notin[-2,2]$.
		\begin{enumerate}
	    		\item $q$ is \emph{hyperbolic} when
    					$\tr(q)\in\RR$ and $|\tr(q)|>2$.
	    		\item  $q$ is \emph{purely loxodromic} when
    					$\tr(q)\in\CC\smallsetminus\RR$.
		\end{enumerate}
    	\end{enumerate}
\end{defn}

%

\subsection{Points on the Hyperboloid as Isometries}

By Theorem \ref{thm:Q},
	points in $\hyp^3$
	and isometries of $\hyp^3$
 	are now both realized as elements of the same algebra $\B$.
Moreover,
	in this model,
	$\hyp^3$
	can be seen as a subset of the group $\Iso(\hyp^3)$
	as follows.

\begin{lem}
	$\M_+^1$
		identifies naturally with a subset of $\Proj\B^1$.
\end{lem}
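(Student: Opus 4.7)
The plan is very short. Since $\M \subset \B$ by construction and the norm on $\M$ is the restriction of $\n$, the defining condition $\n(p)=1$ for $p\in\M_+^1$ immediately gives a set-theoretic inclusion $\M_+^1 \hookrightarrow \B^1$. Composing with the canonical projection $\pi:\B^1 \to \Proj\B^1 = \B^1/\{\pm 1\}$ then yields the candidate map
\[
\iota:\M_+^1 \longrightarrow \Proj\B^1,\quad p \mapsto [p].
\]
The only thing to check is that $\iota$ is injective.

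Injectivity follows from the trace-positivity condition built into the definition of $\M_+^1$. If $p,p' \in \M_+^1$ satisfy $[p]=[p']$ in $\Proj\B^1$, then $p'=\pm p$ in $\B^1$. Since $\tr$ is $\RR$-linear, $\tr(-p) = -\tr(p)$, and both $\tr(p),\tr(p') > 0$ by the $+$ subscript in $\M_+^1$; hence the sign cannot be negative, and $p = p'$.

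The main (and only) obstacle is simply to observe that the plus-subscript in $\M_+^1$ is precisely what selects a unique lift to $\B^1$ out of each $\{\pm p\}$ class, making the projection injective on this subset. No nontrivial computation is involved, and the result is naturally compatible with the identification of $\M_+^1$ with $\mathcal{I}^3 \subset \RR^{(1,3)}$ given in Proposition~\ref{prop:machyp}, where the condition $v_0>0$ plays the same role of picking out one sheet of the hyperboloid.
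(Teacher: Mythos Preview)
Your proof is correct and follows essentially the same approach as the paper: both arguments use the inclusion $\M_+^1\subset\B^1$, compose with the projection to $\Proj\B^1$, and verify injectivity by noting that $\tr(-p)=-\tr(p)$ forces the sign to be positive when both representatives have positive trace.
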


\begin{proof}
	Since $\M^1=\{q\in\B \mid \n(q)=1\}\subset\B^1$,
		it suffices to show there is a natural choice of representative
		under the projection,
		i.e. that the map
	$$\M_+^1\rightarrow\Proj\B^1,\quad p\mapsto\{\pm p\}$$
		is injective.
	This is immediate because if $p\in\M_+^1$,
		then $\tr(p)>0$
		and then $\tr(-p)=-\tr(p)$
		so that $-p\notin\M_+^1$.
\end{proof}

Thus there will be no confusion in speaking of a point $p\in\M_+^1$
	as the isometry $\{\pm p\}\in\Proj\B^1$.

Comparing to Hamilton's theorem,
	since $\HH_0^1\subset\HH^1$,
	Hamilton's action defines an action of $S^2$
	on itself.
We now similarly investigate $\M_+^1$
	as a set of isometries of itself.
For two points $p_1,p_2\in\M_+^1$,
	let $\widetilde{g}(p_1,p_2)$
	be the complete oriented geodesic
	that passes through $p_1$
	and $p_2$
	in that direction.
Trivially,
	$1\in\M_+^1$
	is the identity isometry,
	and for the other elements we have the following.

\begin{thm}\label{thm:symhyp}\samepage
	A point $p\in\M_+^1\sm\{1\}$
		is the hyperbolic isometry
		with translation length $\arcosh(p-p_0)$
		along $\widetilde{g}(1,p)$.
\end{thm}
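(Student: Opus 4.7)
The plan is to reduce the conjugation action to a one-parameter computation in a commutative subalgebra, in the spirit of Hamilton's expression of a rotation as an exponential of a unit imaginary quaternion. First I would decompose $p = w + \widetilde p$ into scalar and pure (trace-zero) parts; since $p \in \M_+^1 \sm \{1\}$, we have $w = \tr(p)/2 > 1$ and $\widetilde p \in \M$ nonzero with $\n(\widetilde p) = 1 - w^2 < 0$. Setting $t := \arcosh(w) > 0$ and $v := \widetilde p/\sqrt{w^2-1}$ gives $v \in \M$ with $\tr(v) = 0$, $\n(v) = -1$, and $v^\dagger = v$, and the universal quadratic identity $v^2 = \tr(v)\,v - \n(v)$ collapses to $v^2 = +1$. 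Hence we obtain the hyperbolic-exponential form $p = \cosh(t) + \sinh(t)\,v$.

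Next I would build the geodesic from this data. Let $\gamma(s) := \cosh(s) + \sinh(s)\,v$ for $s \in \RR$. One checks directly that $\gamma(s) \in \M_+^1$, that $\gamma(0) = 1$ and $\gamma(t) = p$, and — via the quadratic-space isometry in Proposition \ref{prop:machyp} — that $\gamma$ is a unit-speed parametrization of a hyperboloid geodesic; thus $\gamma$ realizes $\widetilde g(1,p)$ by arclength. Since $p \in \Sym(\B,\dagger) = \M$ we have $p^\dagger = p$, so $\mu(p)(q) = pqp$, and both $p$ and every $\gamma(s)$ lie in the commutative subalgebra $\RR[v] \cong \RR[x]/(x^2-1) \subset \B$, in which the standard hyperbolic addition formulas hold verbatim. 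The core computation then becomes
\[
\mu(p)(\gamma(s)) \;=\; p\,\gamma(s)\,p \;=\; \gamma(s)\,p^2 \;=\; \gamma(s)\,\gamma(2t) \;=\; \gamma(s+2t),
\]
showing that $\mu(p)$ acts on $\widetilde g(1,p)$ as translation by a fixed length in the direction from $1$ toward $p$. That $\mu(p)$ is hyperbolic (rather than elliptic or parabolic) then follows from the trace classification, since $\tr(p) = 2w > 2$.

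The main obstacle, in my view, is bookkeeping rather than anything conceptually deep: one must take care that the signs built into $\dagger$ force $v^2 = +1$ (rather than $-1$ as in Hamilton's $\HH$), for this is precisely what turns spherical trigonometry into hyperbolic trigonometry inside $\RR[v]$ and makes the one-line angle-addition computation above work; and one must carefully match the arclength normalization coming from Proposition \ref{prop:machyp} to the expression $\arcosh(p-p_0)$ so that the length extracted from $\gamma(s)\mapsto\gamma(s+2t)$ agrees with the constant stated in the theorem.
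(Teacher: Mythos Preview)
Your approach is correct and in fact more informative than the paper's. Both arguments begin the same way---showing $w>1$ so that $p$ is hyperbolic---and both establish that $\widetilde g(1,p)$ is invariant under $\mu(p)$. The paper does the latter by essentially the same observation you exploit (that $p_0^2=-\n(p_0)\in\RR$, so conjugation by $p$ preserves the set of points whose pure part is a real multiple of $p_0$), but without the exponential packaging; it then simply \emph{cites} the translation-length formula $\arcosh\big(\tr(p)/2\big)$ rather than extracting it from the action. Your use of the commutative subalgebra $\RR[v]\cong\RR[x]/(x^2-1)$ and the one-line identity $p\,\gamma(s)\,p=\gamma(s)\,p^2=\gamma(s+2t)$ is a genuine improvement: it reads the translation length directly off the quaternion multiplication and makes the parallel with Hamilton's rotation-by-exponential transparent.

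One point you should flag rather than file under ``normalization'': your computation yields translation length $2t=2\arcosh(w)$, not $t$, and your answer is the correct one. Indeed $\mu(p)(1)=p\cdot 1\cdot p^\dagger=p^2=\gamma(2t)$, so $d\big(1,\mu(p)(1)\big)=2t$ in the hyperboloid metric; equivalently, the standard relation for a hyperbolic element is $\tr(p)=2\cosh(\ell/2)$, giving $\ell=2\arcosh(w)$. The constant in the theorem statement (and the paper's appeal to it) is off by a factor of~$2$, so the discrepancy you noticed is not an arclength bookkeeping issue but an error in the statement you were asked to prove.
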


\begin{proof}
	Recall that $2w=\tr(p)$
		and $w\in[1,\infty)$,
		so to show that $p$
		is a hyperbolic isometry entails showing $w>1$.
	We know that $p=w+xi+yj+z\sqrt{-1}ij$
		for some $w,x,y,z\in \RR$.
	Since $\n(p)=1$,
		we have $w^2=1+x^2+y^2+z^2\geq 1$.
	Since $p\neq1$,
		we know that at least one of $x,y,z$
		is nonzero,
		so then $w>1$
		as desired.

	It follows that $p$
		has translation length $\arcosh\big(\frac{\tr(p)}{2}\big)$
		and that there is a unique geodesic in $\M_+^1$
		that is invariant under the action of $p$,
		which $p$
		translates along \cite{Ratcliffe1994}.
	We can rewrite the translation length as desired
		because $\frac{\tr(p)}{2}=w=p-p_0$.
	For the remaining part,
		let $\widetilde{g}=\widetilde{g}(1,p)$
		and referring to Figure \ref{fig:pure},
		notice that
	\begin{equation}\label{geo}
		\widetilde{g}
			=\{q\in\M_+^1\mid \exists\ \lambda\in\RR: q_0=\lambda p_0\}.
	\end{equation}
	
	\begin{figure}[h]\centering
		\includegraphics[scale=0.8]{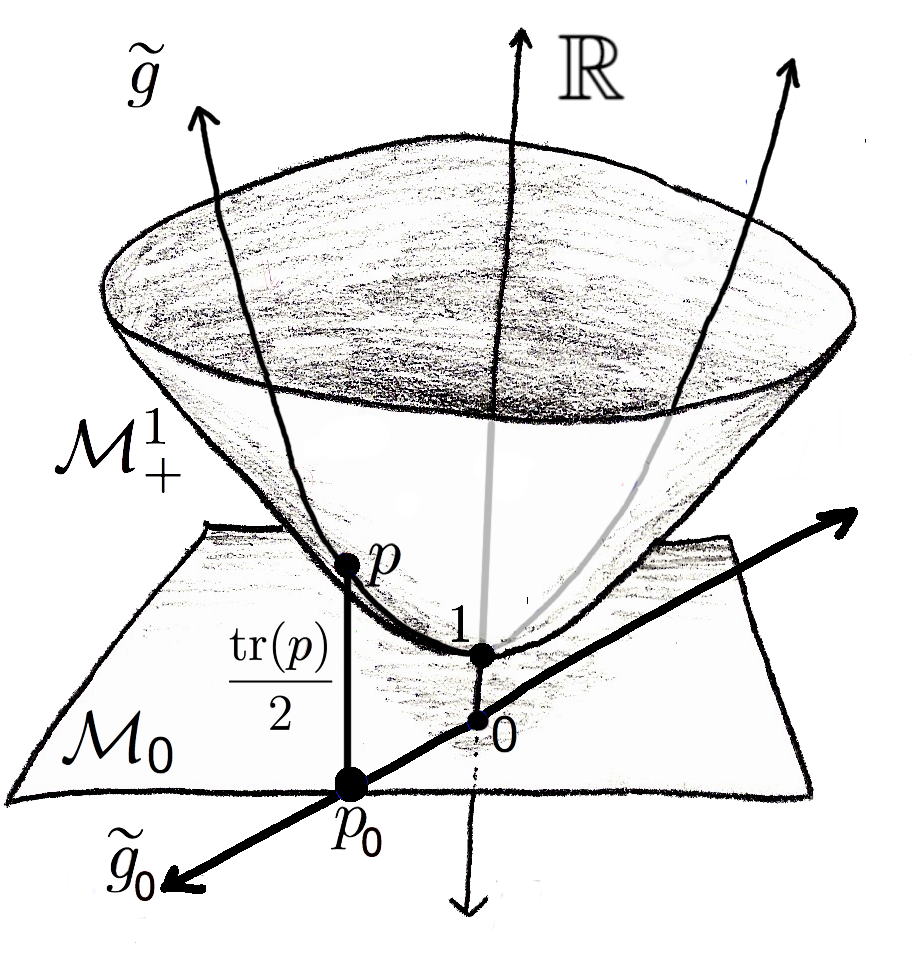}
		\caption{A $2$-dimensional analog of parametrizing $\widetilde g$
			using pure quaternions.}
		\label{fig:pure}
	\end{figure}
	
	\noindent
	Let $q\in\widetilde{g}$,
		so then $\exists\ \lambda\in\RR$
		such that $q=\frac{\tr(q)}{2}+\lambda p_0$.
	Then since $p\in\M=\mathrm{Sym}(\B,\dagger)$,
	\begin{align*}
		\mu(p,q)&=pqp\ct=pqp=\bigg(\frac{\tr(p)}{2}+p_0\bigg)
			\bigg(\frac{\tr(q)}{2}+\lambda p_0\bigg)
			\bigg(\frac{\tr(p)}{2}+p_0\bigg).
	\end{align*}
	If we multiply this out,
		there will be scalars $r,s,t,u\in\RR$
		so that the expression has the form
	\begin{align*}
		\mu(p,q)=r+sp_0+tp_0^2+up_0^3
			=(r+tp_0^2)+(s+up_0^2)p_0.
	\end{align*}

	We then have that $\mu(p,q)_0=(s+up_0^2)p_0$
		is a real multiple of $p_0$
		as desired
		because $p_0^*=-p_0$
		implies $p_0^2=-p_0p_0^*=-\n(p_0)\in\RR$,
		so that $(s+up_0^2)\in\RR$.
	Therefore $\widetilde{g}$
		is invariant under the action of $p$.
\end{proof}
	
\begin{rem}
	There is an important distinction between what happens here
		and what happens in Hamilton's model.
	There,
		$\Iso(\HH_0^1)\cong\Iso(S^2)$
		is comprised of rotations,
		and the action of $\HH_0^1$
		on itself includes a rotation by $\frac{\pi}{2}$
		about every possible axis,
		thus generating the full isometry group.	
	Here, $\Iso(\M_+^1)\cong\Iso(\hyp^3)$
		is comprised of rotations,
		parabolic translations,
		hyperbolic translations
		and purely loxodromic corkscrew motions.
	Yet the action of $\M_+^1$
		on itself includes only the translations
		with axes passing through $1$,
		and so only generates the hyperbolic translations.
\end{rem}

\subsection{The Anti-Hermitian Complement of $\M$}
\label{sec:points sub:W}

We now seek a geometric interpretation for other elements of $\Iso(\hyp^3)$
	using properties of quaternions.

\begin{defn}\label{W}
	The \emph{(standard) skew-Macfarlane space}
		is $\W:=\mathrm{Skew}(\dagger,\B)$.
\end{defn}

It is immediate by Proposition \ref{prop:involutions}
	that $\M\oplus\W=\B$,
	and that $\W=\sqrt{-1}\M$.
The portion of $\W$
	which
	(up to sign)
	contributes to the group $\Proj\B^1$
	is $\W^1=\big\{q\in\W\mid \n(q)=1\big\}$.
This set's geometric structure can be used
	to describe the isometric action of its elements.

\begin{thm}\label{thm:skewlox}
	\begin{enumerate}
		\item[]
		\item $\W^1$
			is a hyperboloid of one sheet and $\W^1_0$
			is an ellipsoid.
		\item A point $p\in\W^1\sm\W^1_0$
			is the purely loxodromic isometry with
			rotation angle $\frac{\pi}{2}$,
			translation length
				$\big|\arcosh(p-p_0)-\frac{\pi\sqrt{-1}}{2}\big|$
			and axis $\widetilde{g}(1,-p^2)$.
	\end{enumerate}
\end{thm}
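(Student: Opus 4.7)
For part (1), using the paper's observation that $\W = \sqrt{-1}\M$, the computation $\n(\sqrt{-1}q) = -\n(q)$ shows that $(\W, \n)$ has signature $(3,1)$, opposite to that of $\M$. Thus $\W^1 = \{\n = 1\}$ is a hyperboloid of one sheet in this signature. Imposing further $\tr = 0$ kills the one-dimensional negative-definite summand $\sqrt{-1}\RR$, so $\W^1_0$ is the unit sphere in the remaining $3$-dimensional positive-definite subspace --- an ellipsoid.

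For part (2), write $p = \sqrt{-1}q$ with $q = w + xi + yj + \sqrt{-1}z\,ij \in \M$, $\n(q) = -1$, and $w \neq 0$. Then $\tr(p) = 2\sqrt{-1}w \in \CC\sm\RR$, so $p$ is purely loxodromic. The trace formula $\tr(g) = 2\cosh(\ell + \sqrt{-1}\theta)$ (consistent with Theorem \ref{thm:symhyp} in the real case) reduces the equation $\cosh(\ell + \sqrt{-1}\theta) = \sqrt{-1}w$ to $\cos\theta = 0$ and $\sinh\ell\,\sin\theta = w$, forcing $\theta = \pi/2$ and $\ell = \arcsinh(w)$. Since $p - p_0 = \sqrt{-1}w$ and $\arcosh(\sqrt{-1}w) = \arcsinh(w) + \sqrt{-1}\pi/2$ on the principal branch, $\ell = \big|\arcosh(p-p_0) - \frac{\pi\sqrt{-1}}{2}\big|$ as claimed.

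For the axis, $p^\dagger = -p$ gives $\mu(p,1) = pp^\dagger = -p^2$; from $qq^* = -1$ one has $q^2 = 2wq + 1$, so $-p^2 = q^2 \in \M_+^1$ (since $\tr(-p^2) = 4w^2 + 2 > 0$), making $\widetilde{g}(1,-p^2)$ a well-defined geodesic. To show $p$ preserves it setwise, I would parametrize it as $\gamma(t) = \cosh(t) + \sinh(t)\,v$ with $v = q_0/\sqrt{w^2+1}$, a unit pure quaternion satisfying $v^2 = 1$, and observe that $p = \sqrt{-1}(w + \sqrt{w^2+1}\,v)$ lies, together with every $\gamma(t)$, in the commutative subalgebra generated by $v$ and $\sqrt{-1}$. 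The product $p\gamma(t)p^\dagger = -p\gamma(t)p$ then collapses to a one-variable calculation yielding $\gamma(t+u)$ for a fixed $u = u(w)$ via the hyperbolic addition formula.

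The main obstacle is this setwise-invariance step: the fact that $\mu(p,1) \in \widetilde{g}(1,-p^2)$ does not by itself prove axis invariance. Placing $p$, $v$, and the entire geodesic into a single commutative subalgebra eliminates the non-commutativity of $\B$ from the computation, reducing the verification to the identity $\cosh(a+b) + \sinh(a+b)v = [\cosh(a) + \sinh(a)v][\cosh(b) + \sinh(b)v]$. From this the axis claim follows, and comparing $u$ to the trace-based $\ell$ gives (up to the paper's factor-of-two convention) the translation length as a consistency check.
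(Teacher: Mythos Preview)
Your argument for (1) and for the trace/rotation analysis in (2) is essentially the paper's: write $p$ in coordinates, observe $\tr(p)=2w\sqrt{-1}$ is purely imaginary, and read off $\theta=\pi/2$ and the translation length from $\arcosh$ of the half-trace.

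Where you diverge is the axis invariance. The paper takes a generic point $q\in\widetilde g(1,-p^2)$, writes it as $\tfrac{\tr(q)}{2}-\lambda(p^2)_0$, expands $\mu(p,q)=-p^2\cdot\tfrac{\tr(q)}{2}+\lambda(p^2)_0p^2$, and then checks by a direct coordinate computation in $\M$ that $\big((p^2)_0p^2\big)_0$ is a real scalar multiple of $(p^2)_0$, so the image stays on the geodesic. Your route is genuinely different: you place $p$ and the whole geodesic $\gamma(t)=\cosh t+\sinh t\,v$ inside the commutative subalgebra generated by $\sqrt{-1}$ and the single pure quaternion $v$ with $v^2=1$, so that $\mu(p,\gamma(t))=-p^2\gamma(t)=q^2\gamma(t)$ collapses to the addition law $\gamma(t_0)\gamma(t)=\gamma(t_0+t)$. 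This is cleaner and more informative, since it simultaneously exhibits the setwise invariance and the actual displacement $t_0=2\arcsinh(w)$ along the axis; the paper's argument only verifies invariance and appeals to \cite{Ratcliffe1994} for the length. The factor-of-two discrepancy you flag is real and reflects the paper's convention in Theorem~\ref{thm:symhyp} of writing $\arcosh(\tr(p)/2)$ rather than $2\arcosh(\tr(p)/2)$ for the translation length.
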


\begin{proof}
	Let $p\in\W^1$.
	Then $p=w\sqrt{-1}+x\sqrt{-1}i+y\sqrt{-1}j+zij$
		for some $w,x,y,z\in\RR$.
		and then
	\begin{gather*}
		1=\n(p)=-w^2+x^2+y^2+z^2.
	\end{gather*}
	So the collection of points $p$
		satisfying $\n(p)=1$
		forms a hyperboloid of one sheet,
		and eliminating the first coordinate gives the ellipsoid
	\begin{gather*}
		\{x^2+y^2+z^2\mid x,y,z\in\RR\}
	\end{gather*}
		of pure quaternions.
	This proves (1).
		
	If $\tr(p)\neq0$,
		then $\tr(p)=2w\sqrt{-1}$
		where $w\in \RR^\times$,
		making $p$
		purely loxodromic.
	Since this number is purely imaginary,
		$\arcosh\big(\frac{\tr(p)}{2})$
		is of the form $r+\frac{\pi}{2}\sqrt{-1}$
		with $r\in\RR$.
	Thus the rotation angle of $p$
		is $\frac{\pi}{2}$
		and the translation length is
			$|r|=\big|\arcosh(p-p_0)-\frac{\pi\sqrt{-1}}{2}\big|$
			\cite{Ratcliffe1994}.
		
	We now show that the axis of $p$
		is $\widetilde{g}(1,-p^2)$.
	First notice that since $p\ct=-p$,
		we have $\mu(p,1)=pp\ct=-p^2$.
	Let $q\in\widetilde{g}(1,-p^2)$.
	Then, similarly to in the proof of Theorem \ref{thm:symhyp},
		$\exists\lambda\in\RR$
		such that
		$$q=\frac{\tr(q)}{2}+\lambda(-p^2)_0
			=\frac{\tr(q)}{2}-\lambda(p^2)_0.$$
	(Note that $(p^2)_0\neq(p_0)^2$.)
	Again using $p\ct=-p$,
		we compute
	\begin{align*}
		\mu(p,q)&=p\big(\tfrac{\tr(q)}{2}-\lambda(p^2)_0\big)p\ct\\
			&=\tfrac{\tr(q)}{2}(-p^2)+\lambda(p^2)_0p^2.
	\end{align*}		
	Thinking of these two summands as Euclidean vectors,
		the first is co-linear with the vector pointing to $-p^2\in\M_+^1$.
	If we can show that adding the second summand
		results in a point on $\widetilde{g}(1,-p^2)$
		we will be done.
	We do this by proving that the pure quaternion part of $(p^2)_0p^2$
		is parallel to the pure quaternion part of $-p^2$.
	Since $p^2\in\M$,
		we have $p^2=r+si+yj+z\sqrt{-d}ij$
		for some $r,s,t,u\in\RR$.
	Then
	\begin{align*}
		\big((p^2)_0p^2\big)_0
			&=\big((si+yj+z\sqrt{-d}ij)(r+si+yj+z\sqrt{-d}ij)\big)_0\\
			&=rsi + ryj + rz\sqrt{-d}ij\\
			&=r(p^2)_0. &\qedhere
	\end{align*}	
\end{proof}

\begin{rem}
	Just as there are hyperbolic isometries not lying on $\M_+^1$,
		there are purely loxodromic isometries not lying on $\W^1$,
		as well as many other isometries not lying on either.
\end{rem}

We can compute information about arbitrary isometries in $\Proj\B^1$
	using the decomposition of $\B$
	into $\M\oplus\W$
	but
	this will entail working with elements of $\M$
	and $\W$
	that are not isometries.
We thus extend $\mu$
	to
\begin{gather}\label{mutilde}
	\widetilde{\mu}:\B\lefttorightarrow\B,\quad
		(q,p)\mapsto qpq^\dagger.
\end{gather}

For $p,q\in\B$,
	their \emph{commutator}
	is defined as $[p,q]:=pq-qp$.
\begin{prop}\label{prop:extmu}
	If $q=m+w\in\Proj\B^1$
		where $m\in\M$
		and $w\in \W$,
		then $[m,w]\in\M_0$
		and
	\begin{gather*}
		\mu(q,1)=\extmu(m,1)+\extmu(w,1)-[m,w].
	\end{gather*}
\end{prop}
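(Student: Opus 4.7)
The plan is to unpack the action $\mu(q,1) = qq^\dagger$ using the fact that $\dagger$ acts as the identity on $\M$ and as negation on $\W$. Writing $q = m+w$ with $m^\dagger = m$ and $w^\dagger = -w$, we get $q^\dagger = m - w$, so
\begin{equation*}
\mu(q,1) = (m+w)(m-w) = m^2 - w^2 + wm - mw = m^2 - w^2 - [m,w].
\end{equation*}
The first two terms are exactly $\widetilde\mu(m,1) = mm^\dagger = m^2$ and $\widetilde\mu(w,1) = ww^\dagger = -w^2$, which immediately gives the desired identity
\begin{equation*}
\mu(q,1) = \widetilde\mu(m,1) + \widetilde\mu(w,1) - [m,w],
\end{equation*}
provided we can make sense of the last summand as an element of $\M_0$.

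To verify $[m,w] \in \M_0$, I would check the two defining conditions separately. For symmetry under $\dagger$, a direct computation using the anti-multiplicativity of the involution (Definition \ref{defn:involution}) gives
\begin{equation*}
[m,w]^\dagger = (mw)^\dagger - (wm)^\dagger = w^\dagger m^\dagger - m^\dagger w^\dagger = (-w)m - m(-w) = mw - wm = [m,w],
\end{equation*}
so $[m,w] \in \Sym(\B,\dagger) = \M$. For the trace condition, I would invoke the cyclic property of the reduced trace on a quaternion algebra, which via Proposition \ref{prop:tracenorm} reduces to the cyclicity of the matrix trace: $\tr(mw) = \tr(wm)$, so $\tr([m,w]) = 0$ and thus $[m,w] \in \M_0$.

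There is essentially no obstacle here; the proposition is a direct algebraic consequence of the decomposition $\B = \M \oplus \W$ from Proposition \ref{prop:involutions}(4) together with the defining sign behavior of $\dagger$ on each summand. The only small point of care is the order of operations in expanding $(m+w)(m-w)$, where one must resist the temptation to treat $m$ and $w$ as commuting — indeed, the appearance of $[m,w]$ is precisely the correction term measuring their non-commutativity, which is what makes the statement nontrivial to write down cleanly in the first place.
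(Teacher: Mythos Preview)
Your proof is correct and follows essentially the same approach as the paper: expand $(m+w)(m+w)^\dagger=(m+w)(m-w)$ using $m^\dagger=m$, $w^\dagger=-w$, identify the pieces as $\widetilde\mu(m,1)$ and $\widetilde\mu(w,1)$, then verify $[m,w]\in\M_0$ by checking $[m,w]^\dagger=[m,w]$ via anti-multiplicativity and $\tr([m,w])=0$ via $\tr(mw)=\tr(wm)$. The paper's proof is simply a terser version of exactly this computation.
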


\begin{proof}
	Using the anti-commutativity of $\dagger$
		along with the facts that $m\ct=m$
		and $w\ct=-w$,
		we compute that $\mu(q,1)=(m+w)(m+w)\ct$
		has the given form.
	Similarly, we get $[m,w]\in\M$
		by observing that $[m,w]=[m,w]\ct$.
	Additionally $[m,w]\in\B_0$,
		because $\tr(mw)=\tr(wm)$
		implies $\tr\big([m,w])=0$.		
%
%
\end{proof}

\begin{cor}
	$\tr\big(\mu(q,1)\big)=\tr\big(\extmu(m,1)\big)+\tr\big(\extmu(w,1)\big)$.
\end{cor}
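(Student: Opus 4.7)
The plan is to deduce the corollary directly from Proposition \ref{prop:extmu} by applying the trace and using its linearity. First I would invoke the proposition to write
$$\mu(q,1)=\widetilde{\mu}(m,1)+\widetilde{\mu}(w,1)-[m,w].$$
Then I would apply $\tr$ to both sides. Since $\tr$ is $\CC$-linear on $\B$ (it is just twice the scalar part, by Definition \ref{defn:norm trace}), this gives
$$\tr\big(\mu(q,1)\big)=\tr\big(\widetilde{\mu}(m,1)\big)+\tr\big(\widetilde{\mu}(w,1)\big)-\tr\big([m,w]\big).$$

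The last step is to observe that the commutator term contributes nothing. This is exactly the content of the statement $[m,w]\in\M_0$ established in Proposition \ref{prop:extmu}, since elements of $\M_0$ have vanishing trace by definition of the subscript $0$. (Alternatively, one can note it directly: $\tr(mw-wm)=\tr(mw)-\tr(wm)=0$ by the cyclicity of the reduced trace, which is essentially how the proposition obtained $[m,w]\in\B_0$ in the first place.) Substituting $\tr([m,w])=0$ yields the desired identity.

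There is no real obstacle here; the corollary is an immediate bookkeeping consequence of the decomposition in Proposition \ref{prop:extmu} together with the trace-zero property of the commutator term. The only thing worth being careful about is that linearity of $\tr$ is valid over $\CC$ (not just $\RR$), so that the equation holds as an equality of complex numbers — which matters because $\widetilde{\mu}(w,1)$ generally has non-real trace.
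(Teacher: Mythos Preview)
Your proof is correct and matches the paper's intent: the corollary is stated without proof precisely because it follows immediately from Proposition~\ref{prop:extmu} by applying $\tr$ and using $[m,w]\in\M_0$. There is nothing to add.
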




\section
	{Comparison to the \Mob Action}
	\label{sec:Mob}

The upper half space model for $\hyp^3$
	admits a well-known identification with a subset of $\HH$
	\cite{Ratcliffe1994},
	which makes for a convenient transfer of data between that model
	and the current approach.	
First,
	to distinguish elements of $\HH$
	from elements of $\B$,
	we write
\begin{gather*}
	\HH=\RR\oplus\RR I\oplus\RR J\oplus\RR IJ,\\
	I^2=J^2=-1,\\
	IJ=-JI.
\end{gather*}
Then $\CC\subset\B$
	as the basefield,
	and also $\CC\subset\HH$
	as $\CC=\RR\oplus\RR I$.
This identification between subspaces of $\B$
	and $\HH$
	will be significant,
	so we will write $\CC$
	as $\RR\oplus\RR I$
	in both instances.

\begin{defn}\label{defn:quat upper}
	\begin{enumerate}
		\item[]
		\item	The \emph{upper half-space} model $\HS^3$
			is $\RR\oplus\RR I\oplus\RR^+ J\subset\HH$
			endowed with the metric induced by the map
			$p=x_1+x_2I+x_3J\mapsto{\dfrac{\sqrt{\n(p)}}{|x_3|}}$.
		\item The \emph{(quaternionic) \Mob action
			by $\kl$
			on $\HS^3$}
			is
		\begin{align*}
			\alpha:\kl\lefttorightarrow\HS^3,\quad
				\bigg(\mtx,p\bigg)\mapsto(ap+b)(cp+d)^{-1}.
		\end{align*}
	\end{enumerate}
\end{defn}


We now obtain a concise relationship between the \Mob
	action on $\HS^3$
	and our isometric action $\mu$
	on $\M_+^1$
	from Theorem \ref{thm:Q}.

\begin{thm}\label{thm:quat-mob}
	Let
	\begin{align*}
		\iota:\M_+^1\rightarrow\HS^3,\quad
			w+xi+j(y+z\,I\,i)\mapsto\frac{y+zI+J}{w+x}.
	\end{align*}
	\begin{enumerate}
		\item $\iota$ is an orientation-preserving isometry.
		\item For all $(q,m)\in\Proj\B^1\times\M_+^1$,
			$\iota\big(\mu(q,m)\big)=\alpha\big(\rho(q),\iota(m)\big)$.
			\label{thm:item iota}
	\end{enumerate}
\end{thm}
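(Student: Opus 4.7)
The plan is to factor $\iota$ through Wigner's Hermitian matrix model, reducing Theorem \ref{thm:quat-mob} to the classical correspondence between $\HM^3$ and $\HS^3$. Define $\sigma:\HM^3\to\HS^3$ by
$$\sigma\begin{pmatrix}A & B \\ \overline{B} & C\end{pmatrix}=\frac{B+J}{C}.$$
Using the explicit form of $\rho$ from the proof of Theorem \ref{thm:precise}, one sees that for every $p=w+xi+yj+z\sqrt{-1}ij\in\M_+^1$,
$$\rho(p)=\begin{pmatrix}w-x & y-\sqrt{-1}z \\ y+\sqrt{-1}z & w+x\end{pmatrix}\in\HM^3$$
and $\sigma(\rho(p))=\iota(p)$. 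Thus $\iota=\sigma\circ\rho|_{\M_+^1}$.

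For part (1), both factors are orientation-preserving isometries between models of $\hyp^3$: the restriction $\rho|_{\M_+^1}\colon\M_+^1\to\HM^3$ because $\rho$ is a $\CC$-algebra isomorphism sending $\n$ to $\det$ (shown in the proof of Theorem \ref{thm:precise}), and $\sigma$ because it is the classical orientation-preserving isometry between the Hermitian hyperboloid and upper half-space models. Their composition $\iota$ inherits both properties.

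For part (2), I would split the equivariance as a composition of two intertwinings. The first, $\rho(\mu(q,m))=\rho(q)\rho(m)\overline{\rho(q)}^\top$, is immediate from the proof of Theorem \ref{thm:precise} since $\rho$ is a $\CC$-algebra isomorphism carrying $\dagger$ to conjugate-transpose; its right-hand side is Wigner's action of $\rho(q)$ on $\rho(m)$. The second is $\sigma(MH\overline{M}^\top)=\alpha(M,\sigma(H))$ for $M\in\SL_2(\CC)$ and $H\in\HM^3$. Since $\sigma(I_2)=J$ and $\alpha$ is a transitive group action on $\HS^3$, this reduces to the base-point identity $\sigma(M\overline{M}^\top)=\alpha(M,J)$: for general $H$ one factors $H=N\overline{N}^\top$ with $N\in\SL_2(\CC)$ (provided by transitivity of Wigner's action) and computes
$$\sigma(MH\overline{M}^\top)=\sigma\big((MN)\overline{MN}^\top\big)=\alpha(MN,J)=\alpha\big(M,\alpha(N,J)\big)=\alpha\big(M,\sigma(H)\big).$$
With $M=\begin{pmatrix}a & b \\ c & d\end{pmatrix}$, the base-point identity amounts to
$$(aJ+b)(cJ+d)^{-1}=\frac{a\overline{c}+b\overline{d}+J}{|c|^2+|d|^2},$$
a short quaternion calculation using $J\lambda=\overline{\lambda}J$ for $\lambda\in\CC$, together with $ad-bc=1$.

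The main obstacle is the base-point identity $\sigma(M\overline{M}^\top)=\alpha(M,J)$: it is elementary but requires careful bookkeeping of quaternion non-commutativity and of the sign conventions governing how complex scalars interact with $J$. Once verified, the transitivity argument and the equivariance of $\rho$ deliver the full theorem.
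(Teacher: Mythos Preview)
Your proposal is correct and structurally parallel to the paper's proof, but your factorization $\iota=\sigma\circ\rho|_{\M_+^1}$ organizes the argument more cleanly. For part (2), both you and the paper reduce to a base-point identity and then extend by transitivity of the two group actions; the difference is that the paper computes the base-point case $\iota\big(\mu(q,1)\big)=\alpha\big(\rho(q),J\big)$ directly in the quaternion coordinates $w,x,y,z$ of $q$ (a longer manipulation involving expanding $qq^\dagger$ and regrouping), whereas you compute the equivalent identity $\sigma(M\overline{M}^\top)=\alpha(M,J)$ in the matrix entries $a,b,c,d$ of $M=\rho(q)$, which is noticeably shorter. For part (1), the paper is more self-contained: it builds $\iota$ explicitly as a composite through the Poincar\'e ball model and tracks orientation by inserting a coordinate permutation, while you defer to $\sigma$ being a ``classical'' isometry. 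That is fine as a strategy, but note that the paper does not cite such a $\sigma$ and a referee might ask you either to supply a reference or to verify directly that $\sigma$ is an orientation-preserving isometry (the equivariance you establish in part (2) in fact yields this, so you could reorder the argument). One small bookkeeping point: in the paper $\HM^3=\mathcal I^3$ denotes the hyperboloid in $\RR^{(1,3)}$, not its image under $\eta$ in $\Herm_2(\CC)$; your use of $\HM^3$ for the Hermitian-matrix sheet should be adjusted to avoid a clash.
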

\begin{rem}
	With $k$
		as in Macfarlane's original notation from (\ref{mac}),
		$w+xi+j(y+z\,I\,i)=w+xi+yj+zk$.
\end{rem}

\begin{proof}
	We prove (1)
		constructively by composing well-known maps,
		written in quaternion terms.
	The Poincar\'{e} ball model for $\hyp^3$
		can be identified with an open ball
	\begin{align*}
		\M_0^{<1}:=\big\{m\in\M\mid\tr(m)=0, \n(m)<1\big\}
	\end{align*}
		lying in the space of pure quaternions
		``under'' the hyperboloid $\M_+^1$.
	The standard conformal projection \cite{BenedettiPetronio1992}
		from the hyperboloid model to the Poincar\'{e} ball model
		then becomes
	\begin{align*}
		\iota_{proj}: \M_+^1\rightarrow\M_0^{<1},\quad
		w+xi+yj+z\,I\,ij\mapsto
			\frac{xi+yj+z\,I\,ij}{1+w}.
	\end{align*}
	
	Next we use the standard isometry \cite{BenedettiPetronio1992}
		from the Poincar\'{e} ball model
		to the upper half-space model,
		but for the image to lie in $\HS^3\subset\HH$ as desired,
		we relabel our coordinates so that the axes $i\RR, j\RR$
		and $Iij\RR$
		become $\RR, I\RR$ and $J\RR$,
		respectively.
	In this notation,
		the isometry is
		an inversion through the sphere of radius $\sqrt{2}$
		centered at $-J\in\HH$
		with a relabeling of coordinates,
		in particular
	\begin{align*}
		\iota_{inv}:\M_0^{<1}\rightarrow\HS^3,\quad
		xi+yj+z\,I\,ij\mapsto
			\dfrac{2x+2yI+(1-x^2-y^2-z^2)J}{x^2+y^2+(z+1)^2}.
	\end{align*}
	
	Lastly,
		observe that $\iota_{inv}$
		is orientation reversing.
	We remedy this and achieve the map $\iota$
		by include the sign change and (even)
		permutation of the coordinates
	\begin{align*}
		\iota_{perm}:\M_0^{<1}\rightarrow\M_0^{<1},\quad
			xi+yj+z\,I\,ij\mapsto yi-zj+Ixij.
	\end{align*}
	
	Using that for all
		$(w+xi+yj+z\,I\,ij)\in\M_+^1$,
			$x^2+y^2+z^2=w^2-1$,
		a computation shows
	\begin{align*}
		(\iota_{inv}\circ\iota_{perm}\circ\iota_{proj})(w+xi+yj+z\,I\,ij)=\frac{y-zI+J}{w+x}.
	\end{align*}
	The formula for $\iota$
		then follows by observing that $yj+zIij=j(y-zIi)$,
		which implies $j(y+zIj)\mapsto zI+J$.
	
	We will prove (2)
		by first showing that the statement holds when $m=1\in\M_+^1$
		and then showing how this generalizes.
	Let  $*$
		be the standard involution on $\HH$,
		and $\n$ the quaternion norm in $\HH$.
	For some $c\in\CC$,
		let $\overline{c}$
		be its complex conjugation and
		$|c|$
		its complex modulus.
	Observe that 
		$c^*=\overline{c}$
		and $\n(c)=|c|^2$
		(as opposed to the norm from $\B$
		which would yield $c^2$).
		
	Let $q=w+xi+yj+zij\in\Proj\B^1$
		where $w,x,y,z\in\CC$.
	We first compute
	\begin{align*}
		\mu(q,1)=qq\ct=&|w|^2+|x|^2+|y|^2+|z|^2
				  +(w\overline{x}+\overline{w}x+y\overline{z}
				  +\overline{y}z)i\\
				\qquad&+(w\overline{y}+\overline{w}y-x\overline{z}
				  -\overline{x}z)j
				  +(-w\overline{z}+\overline{w}z
				  	+x\overline{y}-\overline{x}y
				  )I(-Iij).
	\end{align*}
	Here we have written $ij\in\B$
		as $I(-Iij)$
		because it is easiest to compute $\iota$
		in the coordinates $(1,i,j,-Iij)$
		over $\RR$.
	Observe that, as written above in these coordinates,
		each term is real,
		so that applying $\iota$
		gives
	\begin{align*}
		\iota\big(\mu(q,1)\big)&=
				\dfrac{w\overline{y}+\overline{w}y-x\overline{z}
				-\overline{x}z
				+(-w\overline{z}+\overline{w}z+x\overline{y}-\overline{x}y
				)I\cdot I + J}
				{|w|^2+|x|^2 + |y|^2+|z|^2+w\overline{x} 
				+ \overline{w}x + y\overline{z} + \overline{y}z}.
	\end{align*}
	
	Now we use the following facts.
	First replace the $I\cdot I$
		with $-1$.
	Then observe that for any $c\in\CC=\RR\oplus I\RR\subset\HH$,
		we have
		$Jc=\overline{c}J$,
		and then $(c_1+c_2J)^*=c_1^*+J^*c_2^*=\overline{c_1}-c_2J$.
	We use this to rearrange the terms,
		but first we make a substitution.
	From the norm on $\B$
		we have that $1=w^2-x^2-y^2+z^2=(w-x)(w+x)-(y-z)(y+z)$.
	Augment the $J$-coordinate in the numerator with this,
		and after some manipulation we get
		$$\iota\big(\mu(q,1)\big)
			=\frac{\big(y-z+(w-x)J\big)\big(\overline{w}
				+\overline{x}-(y+z)J\big)}
				{|w+x|^2+|y+z|^2}.$$
	The denominator here is
		$\n\big(w+x+(y+z)J\big)=\big(w+x+(y+z)J\big)
			\big(\overline{w}+\overline{x}-(y+z)J\big)$.
	Making this substitution and cancelling on the right with the common
		factor in the numerator yields
	\begin{align*}	
		\iota\big(\mu(q,1)\big)
			&=\big(y-z + (w-x)J\big)\big(w+x + (y+z)J\big)^{-1}\\
			&=\alpha\Bigg(\begin{pmatrix}
					w-x & y-z\\
					y+z & w+x
				\end{pmatrix},J\Bigg)\\	
			&=\alpha\big(\rho(q),\iota(1)\big).
	\end{align*}

	This establishes equation (2)
		of the Theorem in the case where $m=1$.
	
	If $m\neq1$,
		$\exists\  r\in\Proj\B^1$
		such that $m=\mu(r,1)$
		so that $\forall q\in\Proj\B^1:\iota\big(\mu(q,m)\big)
			=\iota\Big(\mu\big(q,\mu(r,1)\big)\Big)$.
	But
		$\mu$
		defines a left group action,
		so this equals $\iota\big(\mu(qr,1)\big)$.
	Then by the argument above,
		this is equal to $\alpha\big(\rho(qr),\iota(1)\big)$.
	Since $\rho$
		is a homomorphism and $\alpha$
		also defines a left group action,
		this equals $\alpha\Big(\rho(q),\alpha\big(\rho(r),1\big)\Big)$.
	Apply the argument above once again and substitute $m$
		back in,
		and we have
		$\iota\big(\mu(q,m)\big)
			=\alpha\big(\rho(q),\iota(m)\big)$
		as desired.
\end{proof}

\section{Quaternion Models for the Hyperbolic Plane}
	\label{sec:surf}
We have now seen a method for studying $\Iso(\hyp^3)$
	using $\quatC$.
In this section we show how we can restrict this to study $\Iso(\hyp^2)$
	within $\quatR$.

Let $\B=\quatC$
	and $\dagger$
	be as before,
	and let $\A=\quatR$.	
By Definition \ref{defn:dick},
\begin{gather*}
	\A=\RR\oplus\RR i\oplus\RR j\oplus\RR ij,\\
	i^2=j^2=1,\\
	ij=-ji,
\end{gather*}
	and we can view this as a subset $\A\subset\B$.

\begin{defn}\label{defn:Lstand}
	The \emph{(standard) restricted Macfarlane space}
		is $\caL:=\Sym(\dagger,\A)$,
		equipped with the restriction of the quaternion norm.
\end{defn}

In analogy to Theorem \ref{thm:Q},
	let $\caL_+^1:=\big\{p\in\caL\mid\n(p)=1,\tr(p)>0\big\}$.
Observe that $\caL_+^1$
	is a hyperboloid model for the hyperbolic plane $\hyp^2$,
	as follows.
An element $q\in\A$
	is of the form $q=w+xi+yj+zij\in\A$
	with $w,x,y,z\in\RR$,
	thus $q^\dagger=w+xi+yj-zij$.
This gives
\begin{equation*}
	\caL=\RR\oplus\RR i\oplus\RR j\subset\M
\end{equation*}
Now $\n(q)=w^2-x^2-y^2+z^2$,
	so for $p=w+xi+yj\in\caL$
	we get $\n(p)=w^2-x^2-y^2$,
	i.e. the restriction of the norm from $\A$
	to $\caL$
	is a real-valued quadratic form of signature $(1,2)$.
Thus there is a natural quadratic space isometry
	$$\caL_+^1=\big\{q\in\caL\mid \n(q)=1, \tr(q)>0\big\}
		\simeq\big\{v=(w,x,y)\in\RR^3\mid \|v\|=1, w>0\big\}$$
	where the space on the right is
	the standard hyperboloid model for $\hyp^2$.
	
It is immediate from the construction used for Theorem \ref{thm:Q}
	that the matrix representation $\rho$,
	when restricted to $\A$,
	gives an isomorphism $\Proj\A^1\cong\mat_2(\RR)$,
	and gives an isometry from $\caL$
	to the $2\times2$
	symmetric matrices $\{m\in\mat_2(\RR)\mid m^\top=m\}$
	(where the quadratic forms are still the quaternion norm
	and the determinate,
	respectively).
We may thus similarly restrict the group action $\mu$
	to attain the following.

\begin{cor}\label{thm:macsurf}
	An isomorphism $\Proj\A^1\cong\Iso(\hyp^2)$
		is defined by the group action
	\begin{equation*}\label{nu}
		\mu_\A:
			\Proj\A^1\lefttorightarrow\caL_+^1,\quad
			(\g,p)\mapsto\g p\g\ct.
	\end{equation*}
\end{cor}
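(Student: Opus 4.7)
The plan is to deduce the corollary by restricting Theorem \ref{thm:Q} to the real subalgebra $\A \subset \B$. The key ingredients are already laid out in the paragraphs preceding the statement: $\A$ is $\dagger$-stable, since for $q = w+xi+yj+zij$ with $w,x,y,z \in \RR$ we have $q^\dagger = w+xi+yj-zij \in \A$; the subspace $\caL = \Sym(\dagger,\A)$ equals $\A \cap \M$ and carries a quadratic form of signature $(1,2)$, so $\caL_+^1$ is a standard hyperboloid model for $\hyp^2$; and the restriction $\rho|_\A$ is an $\RR$-algebra isomorphism $\A \to \mat_2(\RR)$ which sends $\caL$ onto the $2\times 2$ real symmetric matrices and sends $\dagger$ to matrix transposition.

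First I would check that $\mu_\A$ is well-defined as a map $\Proj\A^1 \times \caL_+^1 \to \caL_+^1$. For $\g \in \A^1$ and $p \in \caL_+^1$, the product $\g p \g^\dagger$ lies in $\A$ by closure of $\A$ under multiplication and $\dagger$, is $\dagger$-symmetric because $(\g p \g^\dagger)^\dagger = \g p^\dagger \g^\dagger = \g p \g^\dagger$, and has reduced norm $1$ by multiplicativity of $\n$. Meanwhile Theorem \ref{thm:Q} already guarantees that $\g p \g^\dagger \in \M_+^1$, and combining this with membership in $\caL$ forces $\mu_\A(\g,p) \in \caL_+^1$.

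Second I would transport $\mu_\A$ across $\rho|_\A$, which turns it into the action $(g,m) \mapsto g m g^\top$ of $\PSL_2(\RR)$ on the norm-one symmetric matrices of positive trace. This is the two-dimensional analogue of Wigner's Theorem \ref{thm:wigner} and is the classical signature-$(1,2)$ spinor representation, known to realize the isomorphism $\PSL_2(\RR) \cong \Iso(\hyp^2)$. Equivalently, one can observe that the setwise stabilizer of the totally geodesic plane $\caL_+^1 \subset \M_+^1$ inside $\Proj\B^1 \cong \Iso(\hyp^3)$ is precisely $\Proj\A^1$, and that its action on that plane must therefore be the full orientation-preserving isometry group.

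The main obstacle is really just bookkeeping: one has to verify that $\dagger$ restricts to transposition under $\rho|_\A$, which is transparent from formula (\ref{rho}) since $\rho$ sends each of $1,i,j$ to a symmetric matrix and sends $ij$ to a skew-symmetric matrix, precisely matching the sign change on the $ij$-coordinate produced by $\dagger$. All substantive geometry is already packaged into Theorem \ref{thm:Q} and the observations preceding the statement, so the corollary reduces to this restriction argument.
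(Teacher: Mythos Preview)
Your proposal is correct and follows essentially the same route as the paper: the paper states the corollary as immediate from the construction in Theorem~\ref{thm:precise}, observing that $\rho|_\A$ is an isomorphism onto $\mat_2(\RR)$ taking $\caL$ to the symmetric matrices and $\dagger$ to the transpose, so that $\mu$ restricts to the two-dimensional analogue of Wigner's action. You supply more detail (the well-definedness check and the explicit verification that $\rho$ intertwines $\dagger$ with transposition), but the argument is the same.
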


With this,
	the results of the previous sections
	have the following consequences for $\hyp^2$.

\begin{cor}\samepage\label{thm:surftools}
	\begin{enumerate}
		\item[]
		\item A point $p\in\caL_+^1$
				is the hyperbolic isometry that translates by
				$\arcosh(p-p_0)$
				along the geodesic $\widetilde g(1,p)$.
		\item Denoting the upper half-plane model for $\hyp^2$
				by $\HS^2=\RR\oplus I\RR^+\subset\CC$,
				the map
				$$\iota_\A:\caL_+^1\rightarrow\HS^2,\quad
					w+xi+yj\mapsto\frac{y+zI}{w+x}$$
				is an orientation-preserving isometry
				which transfers $\mu_\A$
				to the standard \Mob
				action under the matrix representation
				given by the restriction of $\rho$
				to $\Proj\B^1$.
	\end{enumerate}
\end{cor}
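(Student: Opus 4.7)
The plan is to derive both parts of the corollary as direct restrictions of the corresponding three-dimensional results. Since $\A \subset \B$ is closed under both quaternion multiplication and the involution $\dagger$, we have $\caL = \Sym(\dagger,\A) = \M \cap \A$ and $\caL_+^1 \subset \M_+^1$, and $\mu_\A$ is simply the restriction of $\mu$ to $\Proj\A^1 \times \caL_+^1$. The two-dimensional statements will then follow from Theorems \ref{thm:symhyp} and \ref{thm:quat-mob} after checking that the relevant subsets, geodesics, and maps restrict compatibly.

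For part (1), any $p = w+xi+yj \in \caL_+^1 \setminus \{1\}$ lies in $\M_+^1 \setminus \{1\}$, so Theorem \ref{thm:symhyp} identifies $p$ with the hyperbolic isometry of $\M_+^1$ having translation length $\arcosh(p - p_0)$ along $\widetilde{g}(1,p)$. To specialize this to $\caL_+^1$, I would confirm that $\widetilde{g}(1,p)$ lies entirely within $\caL_+^1$: using the parametrization (\ref{geo}), every point of $\widetilde{g}(1,p)$ has the form $q = \tfrac{\tr(q)}{2} + \lambda p_0$ for some $\lambda \in \RR$, and since $p_0 = xi+yj \in \caL$ such a $q$ automatically lies in $\caL$, hence in $\caL \cap \M_+^1 = \caL_+^1$. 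Thus $p$ acts on $\caL_+^1$ by translating along the geodesic $\widetilde g(1,p)$ viewed inside the restricted model, as claimed.

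For part (2), I would identify $\iota_\A$ as the restriction of $\iota$ from Theorem \ref{thm:quat-mob}, after noting that $\HS^2$ sits inside $\HS^3$ as a totally geodesic half-plane (with the standard coordinate identification interchanging the roles of $J$ and $I$). Forcing the $z$-coordinate to vanish in the formula for $\iota$ lands the image in this half-plane, which is precisely where $\iota_\A$ takes its values. Theorem \ref{thm:quat-mob} then transfers $\mu_\A$ to the \Mob action $\alpha$ via $\rho$; since every element of $\A$ has only real coefficients, $\rho$ restricts to a homomorphism $\Proj\A^1 \to \mathrm{PSL}_2(\RR)$, and the restriction of $\alpha$ to this subgroup acting on $\HS^2$ is by definition the standard \Mob action. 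Orientation preservation is inherited directly from Theorem \ref{thm:quat-mob}.

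The principal difficulty is essentially notational: carefully reconciling the coordinate conventions between $\M$ and $\caL$, and between $\HS^3$ and $\HS^2$, so that the restriction is genuinely the identity on the shared data rather than an artifact of relabeling. Once these identifications are fixed, both statements are immediate consequences of the three-dimensional theorems.
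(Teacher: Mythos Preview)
Your proposal is correct and follows exactly the approach the paper intends: the paper states this corollary without proof, presenting it simply as a consequence of restricting the three-dimensional results (Theorems \ref{thm:symhyp} and \ref{thm:quat-mob}) to $\A\subset\B$, which is precisely what you outline. Your added care in verifying that $\widetilde g(1,p)\subset\caL_+^1$ and in flagging the $I\leftrightarrow J$ coordinate relabeling between $\HS^2$ and $\HS^3$ goes slightly beyond what the paper spells out, but is entirely in the same spirit.
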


\begin{rem}\samepage
	\begin{enumerate}
		\item[]
		\item There is no $2$-dimensional
				corollary of Theorem \ref{thm:skewlox}
				because $\Sk(\dagger,\A)=\RR ij$,
				and for an element $zij\in\RR ij$,
				its norm is $\n(zij)=-z^2$,
				which cannot equal 1.
			Likewise,
				$\Iso(\hyp^2)$
				does not contain loxodromic elements.
		\item One also obtains a quaternionic representation
				of $\Iso(\hyp^2)$
				and hyperboloid model for $\hyp^2$,
				via the group action
				$\Proj\A^1\times\A_0^1\rightarrow\A_0^1,
					(q,p)\mapsto qpq^*$
				\cite{Voight2016}.
			While that action is a more direct generalization
				of Hamilton's classical result,
				it does not admit an analogous extension to hyperbolic
				$3$-space.
	\end{enumerate}
\end{rem}

\section{Generalizations}\label{sec:gen}

We now obtain a generalization of Theorem \ref{thm:precise}
	by extending some of the constructions we used before
	and applying a similar argument.
Let $K\subseteq\CC$
	be a field,
	let $a,b\in K^\times$
	and let $\B=\quatK$.
Define
\begin{align*}
	\rho_{\B}:\B\hookrightarrow
		\mat_2\big(K(\sqrt{a},\sqrt{b})\big),\quad
		w+xi+yj+zij\mapsto
		\begin{pmatrix}
			w-x\sqrt{a} & y\sqrt{b}-z\sqrt{ab}\\
			y\sqrt{b}+z\sqrt{ab} & w+x\sqrt{a}
		\end{pmatrix}.
\end{align*}
Notice that the map $\rho$
	from (\ref{rho})
	is recovered by taking $K=\CC$
	and $a=b=1$.
In general though, $\rho_\B$
	need not be surjective.
	
\begin{lem}\label{lem:rhoB}
	The map $\rho_\B$
		is an injective $K$-algebra homomorphism.
\end{lem}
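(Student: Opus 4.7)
The plan is to proceed exactly as in the corresponding portion of the proof of Theorem \ref{thm:precise}, generalizing the computation from the special case $a=b=1$ to arbitrary $a,b\in K^\times$. I would first establish that $\rho_\B$ is $K$-linear, then check multiplicativity on generators, and finally verify injectivity from the matrix form.

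For $K$-linearity, I would simply note that each entry of $\rho_\B(w+xi+yj+zij)$ is a $K(\sqrt{a},\sqrt{b})$-linear expression in $w,x,y,z$, so additivity and $K$-scalar compatibility are immediate, and $\rho_\B(1) = I$. For multiplicativity, since $\B$ is generated as a $K$-algebra by $1,i,j$ subject to the relations $i^2=a$, $j^2=b$, $ij=-ji$, it suffices to verify that the images satisfy the same relations. This reduces to four short matrix calculations:
\begin{align*}
\rho_\B(i)^2 &= \begin{pmatrix}-\sqrt{a} & 0\\ 0 & \sqrt{a}\end{pmatrix}^2 = aI = \rho_\B(a),\\
\rho_\B(j)^2 &= \begin{pmatrix}0 & \sqrt{b}\\ \sqrt{b} & 0\end{pmatrix}^2 = bI = \rho_\B(b),\\
\rho_\B(i)\rho_\B(j) &= \begin{pmatrix}0 & -\sqrt{ab}\\ \sqrt{ab} & 0\end{pmatrix} = \rho_\B(ij),\\
\rho_\B(j)\rho_\B(i) &= \begin{pmatrix}0 & \sqrt{ab}\\ -\sqrt{ab} & 0\end{pmatrix} = -\rho_\B(ij),
\end{align*}
where the values of $\rho_\B(ij)$ and $\rho_\B(a)$, $\rho_\B(b)$ are read off directly from the defining formula.

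For injectivity, suppose $\rho_\B(w+xi+yj+zij)$ is the zero matrix. The diagonal equations give $w - x\sqrt{a} = 0 = w + x\sqrt{a}$, and adding and subtracting (and using $a\ne 0$, so $\sqrt{a}\neq 0$) forces $w=x=0$. The antidiagonal equations $y\sqrt{b}\pm z\sqrt{ab}=0$ likewise force $y=z=0$. Hence $\ker\rho_\B = 0$.

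No step here is a genuine obstacle, since everything follows from the explicit formula and the defining relations of $\quatK$; the mild subtlety is just that when $\sqrt{a}$ or $\sqrt{b}$ already lies in $K$ the target field may coincide with $K$, but the argument is insensitive to this. (One could alternatively invoke the fact that $\B$ is a simple $K$-algebra, so any nonzero $K$-algebra homomorphism out of $\B$ is automatically injective, reducing the proof to checking that $\rho_\B$ is a nonzero algebra map — but the direct matrix verification is just as short.)
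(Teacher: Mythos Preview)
Your proof is correct and follows essentially the same approach as the paper's own proof: verify $K$-linearity from the explicit formula, check the defining relations $i^2=a$, $j^2=b$, $ij=-ji$ via the four short matrix computations, and deduce injectivity from the four vanishing entry equations using $a,b\neq 0$. The only cosmetic difference is that the paper writes the fourth check as $\rho(-ij)=\rho(j)\rho(i)$ rather than $\rho_\B(j)\rho_\B(i)=-\rho_\B(ij)$, which is of course equivalent.
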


\begin{proof}
	Since $\rho_\B$
		is linear in $w,x,y,z$,
		it preserves addition.
	It is a homomorphism because it
		preserves the multiplication laws on $i$
		and $j$,
		as follows.
	\begin{align*}
		\rho(i)^2&=\begin{pmatrix}-\sqrt{a} & 0\\
				0 & \sqrt{a}
			\end{pmatrix}^2
			=\begin{pmatrix}
				a & 0\\
				0 & a
			\end{pmatrix}
			=\rho(a)=\rho(i^2).\\
		\rho(j)^2&=\begin{pmatrix}0 & \sqrt{b}\\
				\sqrt{b} & 0
			\end{pmatrix}^2
			=\begin{pmatrix}
				b & 0\\
				0 & b
			\end{pmatrix}
			=\rho(b)=\rho(j^2).\\
		\rho(ij)&=\begin{pmatrix}0 & -\sqrt{ab}\\
				\sqrt{ab} & 0
			\end{pmatrix}
			=\begin{pmatrix}-\sqrt{a} & 0\\
				0 & \sqrt{a}
			\end{pmatrix}
			\begin{pmatrix}0 & \sqrt{b}\\
				\sqrt{b} & 0
			\end{pmatrix}
			=\rho(i)\rho(j).\\
		\rho(-ij)&=\begin{pmatrix}0 & \sqrt{ab}\\
				-\sqrt{ab} & 0
			\end{pmatrix}
			=\begin{pmatrix}0 & \sqrt{b}\\
				\sqrt{b} & 0
			\end{pmatrix}
			\begin{pmatrix}-\sqrt{a} & 0\\
				0 & \sqrt{a}
			\end{pmatrix}
			=\rho(j)\rho(i).
	\end{align*}
	
	$\rho_\B$
		is injective because if $\rho_\B(w+xi+yj+zij)=\begin{pmatrix}
				0 & 0\\
				0 & 0
			\end{pmatrix}$,
		then
	\begin{align*}
		w-x\sqrt{a}=0 &\qquad y\sqrt{b}-z\sqrt{ab}=0\\
		w+x\sqrt{a}=0 &\qquad y\sqrt{b}+z\sqrt{ab}=0
	\end{align*}
		but $a,b\neq0$, which gives $w=x=y=z=0$.
\end{proof}

For any quaternion algebra $\B$
	over a subfield of $\CC$,
	we have that $\Proj\rho_\B(\B^1)\leq\kl\cong\Iso(\hyp^3)$,
	implying an injective
	homomorphism $\Proj\rho_\B(\B^1)\hookrightarrow\Iso(\hyp^3)$.
As shown below,
	for certain choices of $\B$
	we can make this explicit using quaternion multiplication.

\begin{thm}\label{thm:abd}
	Let $\B=\quatK$
		where $K=F(\sqrt{-d})$,
		$F\subset\RR$,
		and $a,b,d\in F^+$.
	Then there is a unique involution $\dagger$
		on $\B$
		such that $\M:=\Sym(\dagger,\B)$,
		equipped with the quaternion norm,
		is a quadratic space of signature $(1,3)$
		over $\Sym(\dagger,K)$.
	Moreover,
		letting
		$$\M_+^1:=\big\{p\in\M\mid
			\n(p)=1, \tr(p)\in\RR^+\big\},$$
		an injective homomorphism
		$\Proj\B^1\hookrightarrow\Iso(\hyp^3)$
		is defined by the group action
		$$\mu_\B:\Proj\B^1\lefttorightarrow\M_+^1,\quad
			(u,p)\mapsto upu^\dagger.$$
\end{thm}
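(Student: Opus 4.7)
The plan is to follow the template of Theorem \ref{thm:precise}, replacing the base field $\CC$ with $K=F(\sqrt{-d})$. First I would exhibit the candidate involution explicitly: extend the nontrivial element of $\mathrm{Gal}(K/F)$ to $\B$ by
\[
	(w+xi+yj+zij)^\dagger:=\overline{w}+\overline{x}\,i+\overline{y}\,j-\overline{z}\,ij,
\]
where $\overline{\,\cdot\,}$ denotes Galois conjugation on $K$ with fixed field $F$. This is an involution of the second kind with $\Sym(\dagger,K)=F$, and its symmetric $F$-space is
\[
	\M=F\oplus Fi\oplus Fj\oplus F\sqrt{-d}\,ij,
\]
on which $\n$ restricts, for $p=w+xi+yj+\sqrt{-d}\,z\,ij$ with $w,x,y,z\in F$, to $w^2-ax^2-by^2-abd\,z^2$, a form of signature $(1,3)$ over $F$ since $a,b,d\in F^+$. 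For uniqueness, any involution with nontrivial $\Sym(\dagger,K)$-signature must be of the second kind, forcing $\Sym(\dagger,K)=F$, after which a case analysis of how such an involution can act on the generators $i$, $j$, $ij$ together with the $(1,3)$ signature requirement singles out the $\dagger$ above.

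To establish the injective homomorphism I would mirror the matrix calculation from the proof of Theorem \ref{thm:precise}, verifying that the map $\rho_\B$ of Lemma \ref{lem:rhoB} transfers $\dagger$ to the complex conjugate transpose on $\mat_2\bigl(K(\sqrt{a},\sqrt{b})\bigr)\subset\mat_2(\CC)$. The hypothesis $a,b\in F^+\subset\RR^+$ is essential here because it makes $\sqrt{a},\sqrt{b}\in\RR$ fixed by complex conjugation, so the special-case computation goes through verbatim. It follows that $\rho_\B$ sends $\M_+^1$ into Wigner's hyperboloid $\HM^3$ and intertwines $\mu_\B$ with the Wigner $\SL_2(\CC)$-action on $\HM^3$, so composing $\rho_\B$ with Wigner's isomorphism (Theorem \ref{thm:wigner}) produces the desired homomorphism $\Proj\B^1\hookrightarrow\Iso(\hyp^3)$. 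Well-definedness of $\mu_\B$ on $\M_+^1$ (symmetry, unit norm, positive trace) is inherited from well-definedness of the Wigner action, and injectivity of the resulting homomorphism reduces immediately to injectivity of $\rho_\B$, already proved in Lemma \ref{lem:rhoB}.

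The main obstacle is the uniqueness portion. Over $\CC$, the clean $4{+}4$ positive/negative-definite split of $\B$ as an $\RR$-space used in Theorem \ref{thm:precise} pinned $\M$ down almost immediately; here the corresponding decomposition of $\B$ as an $F$-space depends on $a$, $b$, $d$ and on $[F:\QQ]$, so the cleanest route is probably to argue first that $\dagger|_K$ is forced to be the nontrivial Galois automorphism, then classify the $F$-algebra extensions of this Galois automorphism to an involution on $\B$ and show that only the one listed above produces a symmetric space of signature $(1,3)$.
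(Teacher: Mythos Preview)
Your proposal is correct and matches the paper's proof almost exactly: exhibit $\dagger$ explicitly, identify $\M=F\oplus Fi\oplus Fj\oplus\sqrt{-d}\,Fij$ with signature $(1,3)$, verify $\rho_\B(q^\dagger)=\overline{\rho_\B(q)}^{\top}$ (using $\sqrt{a},\sqrt{b}\in\RR$), and then invoke Wigner via Lemma~\ref{lem:rhoB}.

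The only point where you diverge is the uniqueness argument, and there your anticipated obstacle is not real. You worry that the $4+4$ positive/negative-definite split used in Theorem~\ref{thm:precise} will not generalize cleanly because it ``depends on $a,b,d$ and on $[F:\QQ]$''; in fact the paper carries that same argument through verbatim. Viewing $\B$ as an $8$-dimensional $F$-space, the norm diagonalizes in the basis $\{1,\sqrt{-d},i,\sqrt{-d}\,i,j,\sqrt{-d}\,j,ij,\sqrt{-d}\,ij\}$, and the hypothesis $a,b,d\in F^+$ makes the sign pattern identical to the $\CC/\RR$ case, giving
\[
\B=\bigl(F\oplus\sqrt{-d}\,Fi\oplus\sqrt{-d}\,Fj\oplus Fij\bigr)\oplus\bigl(\sqrt{-d}\,F\oplus Fi\oplus Fj\oplus\sqrt{-d}\,Fij\bigr)
\]
as the positive-definite/negative-definite decomposition. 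Since any involution fixes $1$ and any involution of the second kind must move $\sqrt{-d}$, the $(1,3)$ requirement forces $\M$ exactly as before. The degree $[F:\QQ]$ plays no role. So your proposed detour through a classification of $F$-algebra extensions of the Galois automorphism is unnecessary, though it would also work.
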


\begin{proof}
	We show existence and uniqueness of $\dagger$
		simultaneously by working constructively.
		
	Firstly,
		for a quadratic space to have non-trivial signature,
		it cannot be over a complex field,
		thus any such $\dagger$
		must satisfy $\Sym(\dagger,K)\subseteq F$.
	This would make $\dagger$
		an involution of the second kind,
		so that by Proposition \ref{prop:involutions},
		$\Sym(\dagger,K)=F$.

	Now suppose $\sig_F(\M)=(1,3)$. 
	For $q=w+xi+yj+zij\in\B$
		with $w,x,y,z\in F(\sqrt{-d})$,
		the quaternion norm is $\n(q)=w^2-ax^2-by^2+abz^2$.
	So viewed as a normed $F$-space,
		$$\B=\big(F\oplus\sqrt{-d}Fi\oplus\sqrt{-d}Fj\oplus Fij\big)
		\oplus\big(\sqrt{-d}F\oplus Fi\oplus Fj\oplus\sqrt{-d}Fij\big)$$
		is a decomposition of $\B$
		into its positive-definite and negative-definite subspaces,
		respectively.
	Since $F\subset\M$
		is positive-definite
		and $\sqrt{-d}F\cap\M=\O$,
		there is only one possibility for the $3$-dimensional
		negative-definite portion of $\M$.
	This forces
	\begin{align*}
		\M&=F\oplus Fi\oplus Fj\oplus\sqrt{-d} Fij\text{, and}\\
		\Sk(\dagger,\B)
			&=\sqrt{-d}F\oplus\sqrt{-d}Fi\oplus\sqrt{-d}Fj\oplus Fij,
	\end{align*}
		which uniquely determines $\dagger$
		as the involution given by
		$$q^\dagger=
			\overline{w}+\overline{x}i+\overline{y}j-\overline{z}ij.$$
	
	Next,
		since $\mathrm{sig}(n|_\M)=(1,3)$,
		the space $\M^1=\{p\in\M\mid\n(p)=1\}$
		is a $3$-dimensional
		hyperboloid of $2$
		sheets defined over $F$.
	Since $F$
		is the positive-definite axis of $\M$,
		and the scalar part of some $p\in\M$
		is $\frac{\tr(p)}{2}$,
		the space $\M_+^1=\{p\in\M^1\mid\tr(p)>0\}$
		is the upper sheet of this hyperboloid.
	
	Now $\M_+^1$
		is not necessarily isometric to $\hyp^3$
		since it is defined over $F$
		(which need not equal $\RR$),
		and $\M_+^1$
		need not be a standard hyperboloid
		(we can have $a,b$ or $d\neq1$).
	Nonetheless it is closed under the action of $\Proj\B^1$
		as defined by $\mu_\B$
		because for $(u,p)\in\Proj\B^1\times\M_+^1$,
		we have $(upu\ct)\ct=upu\ct$,
		$\n(upu\ct)=1$
		and $\tr(upu\ct)>0$,
		i.e. $\mu(u,p)\in\M_+^1$.

	So we get an injection
		$\Proj\B^1\hookrightarrow\Iso(\hyp^3)$
		by taking the image of $\rho_\B$
		in $\mat_2(\CC)$,
		and transferring $\mu_\B$
		to Wigner's spinor representation.
	To see this,
		first recall that $\rho_{\B}$
		is an injective homomorphism
		by Lemma \ref{lem:rhoB}.
	Then we complete the proof by showing that
		$\rho_{\B}(q^\dagger)=\overline{\rho(q)}^\top$,
		which also implies that $\rho_{\B}(\M)$
		is the set of Hermitian matrices in $\rho_{\B}(\B)$.
	\begin{align*}
		\rho_\B\big((w+xi+yj+zij)\ct\big)
			&=\rho_\B(\overline{w}+\overline{x}i
				+\overline{y}j-\overline{z}ij)\\
			&=\begin{pmatrix}
					\overline{w}-\overline{x}\sqrt{a}
						& \overline{y}\sqrt{b}+\overline{z}\sqrt{ab}\\
					\overline{y}\sqrt{b}-\overline{z}\sqrt{ab}
						& \overline{w}+\overline{x}\sqrt{a}
				\end{pmatrix}\\
			&=\overline{\rho_\B(w+xi+yj+zij)}^\top.
	\end{align*}
\end{proof}

\begin{rem}
\begin{enumerate}
	\item[]
	\item Theorem \ref{thm:Q}
			is recovered by taking $F=\RR$
			and $a=b=d=1$.
	\item	This approach also gives a more direct parallel
			with Hamilton's Theorem.
		Indeed,
			if we take $\B=\HH$
			in the definition of $\rho_\B$,
			that is $K=\RR$
			and $a=b=-1$,
			then we get
		\begin{align*}
			\rho_\B: &\HH\rightarrow\mat_2(\CC),\quad
				w+xi+yj+zij\mapsto
				\begin{pmatrix}
					w-x\sqrt{-1} & y\sqrt{-1}-z\\
					y\sqrt{-1}+z & w+x\sqrt{-1}
				\end{pmatrix}.
		\end{align*}
		Then if we generalize $\dagger$
			to be the involution defined by $q\ct=\overline q^\top$,
			a straightforward computation shows that
			in this case, $q^\dagger=q^*$.
	\item Quaternion algebras over complex number fields
			arise as arithmetic invariants
			of complete orientable finite-volume hyperbolic $3$-manifolds
			\cite{MaclachlanReid2003},
			and in some cases these quaternion algebras
			are of the form required by Theorem \ref{thm:abd}.
		This occurs
			for instance for non-compact arithmetic manifolds,
			in which case $F=\QQ$
			and $a=b=1$.
		Other examples and applications of this are explored
			in \cite{Quinn2016b}.
\end{enumerate}
\end{rem}
%
%

\section{Acknowledgements}

This work has been made possible thanks to support from
	the Graduate Center of the City University of New York
	and the Instituto de Matem\'{a}ticas at
	Universidad Nacional Aut\'{o}noma de M\'{e}xico,
	Unidad de Cuernevaca.
I would like to thank Abhijit Champanerkar
	for helpful suggestions throughout,
	and John Voight for generously sharing
	his vast knowledge on quaternion algebras
	and making preprints of his book available.
In addition I would like to thank Ara Basmajian,
	Roberto Callejas-Bedregal,
	Keith Conrad,
	Seungwon Kim,
	Ilya Kofman,
	Aurel Page,
	Igor Rivin,
	Roland van der Veen,
	Alberto Verjovsky,
	Matthius Wendt
	and Alex Zorn for 
	helpful comments, conversations and correspondence.
	
\bibliographystyle{plain}
\bibliography{references}

\end{document}